\def\x{\chi_{_{NL}}}
\def\diam{{\rm diam}}
\theoremstyle{plain}
\newtheorem{theorem}{Theorem}
\newtheorem{lemma}{Lemma}
\newtheorem{cor}{Corollary}
\newtheorem{prop}{Proposition}
\newtheorem{remark}{Remark}
\newtheorem{conj}{Conjecture}
\newtheorem{defi}{Definition}
\begin{document}

\title{
 Neighbor-Locating Colorings in Graphs {\color{red}}
}

\author[2]{Liliana Alcon\thanks{Partially supported by PIP 11220150100703CO CONICET, liliana@mate.unlp.edu.ar}}
\author[2]{Marisa Gutierrez\thanks{Partially supported by  PIP 11220150100703CO CONICET, marisa@mate.unlp.edu.ar}}
\author[1]{Carmen Hernando\thanks{Partially supported by projects MTM2015-63791-R (MINECO/FEDER) and Gen. Cat. DGR 2017SGR1336, carmen.hernando@upc.edu}}
\author[1]{Merc\`e Mora\thanks{Partially supported by projects MTM2015-63791-R (MINECO/FEDER), Gen. Cat. DGR 2017SGR1336 and H2020-MSCA-RISE project 734922-CONNECT, merce.mora@upc.edu}}
\author[1]{Ignacio M. Pelayo\thanks{Partially supported by projects MINECO MTM2014-60127-P, ignacio.m.pelayo@upc.edu}}

\affil[1]{Departament de Matem\`atiques, Universitat Polit\`ecnica de Catalunya}

\affil[2]{Centro de Matem\'aticas, Universidad Nacional de La Plata}

\date{}

\maketitle


\begin{abstract}
A \emph{$k$-coloring} of a  graph $G$ is a $k$-partition $\Pi=\{S_1,\ldots,S_k\}$ of $V(G)$ into independent sets, called \emph{colors}.
A $k$-coloring is called \emph{neighbor-locating} if for every pair of vertices $u,v$ belonging to the same color  $S_i$, the set of colors of the neighborhood of $u$ is different from the set of colors of the neighborhood of $v$. 
The \emph{neighbor-locating chromatic number} $\chi _{_{NL}}(G)$ is the minimum  cardinality of a neighbor-locating coloring of $G$.

\vspace{.2cm}
We establish some tight bounds for the neighbor-locating chromatic number of a graph, in terms of its order, maximum degree  and   independence number. 
We determine all connected graphs of order $n\geq 5$ with neighbor-locating chromatic number $n$ or $n-1$.  
We examine the neighbor-locating chromatic number for two graph operations: join and disjoint union,
and also for two graph families: split graphs and Mycielski graphs.

\vspace{1.1cm}\noindent {\it Key words:} coloring; domination; location; vertex partition; neighbor-locating coloring.

\end{abstract}

\section{Introduction}

\vspace{.2cm}
Domination and location in graphs are two important subjects that have received much attention,
usually separately, but sometimes also both together.
There are mainly two types of location, metric location and  neighbor location.
In this work, we are interested in neighbor location, and we explore this concept in the particular context of a special kind of vertex partitions, called colorings.

\vspace{.6cm}
Metric location in sets was simultaneously introduced by P. Slater \cite{slater} and
F. Harary and R. A. Melter \cite{hararymelter} and further studied in different contexts (see  \cite{chmppsw07,hmpsw10}).
In  \cite{heoe04}, M. A. Henning and O. R. Oellermann introduced the so-called \emph{metric-locating-dominating} sets, by merging the concepts of  metric-locating set and  dominating set.

\vspace{.2cm}
In  \cite{ChaSaZh00}, G. Chartrand,  E. Salehi and P. Zhang, brought the notion of metric location to the ambit of vertex partitions, introducing the resolving partitions, also called metric-locating partition, and defining the partition dimension.
Metric location and domination, in the context of vertex partitions, are studied in \cite{hemope16}.
In  \cite{cherheslzh02}, there were introduced  the so-called \emph{locating colorings} considering resolving partitions formed by independents sets.

\vspace{.3cm}
Neighbor location in sets was  introduced by P. Slater in \cite{slater2}.
Given a graph $G$, a set $S\subseteq V(G)$ is a \emph{dominating set} if every vertex not in $S$ is adjacent to some vertex in $S$. 
A set $S\subseteq V(G)$ is a \emph{locating-dominating set} if $S$ is a dominating set and $N(u)\cap S\neq N(v)\cap S$ for every two different vertices $u$ and $v$ not in $S$. 
The \emph{location-domination number} of $G$, denoted by $\lambda(G)$, is the minimum cardinality of a locating-dominating set.
In \cite{chmpp13,hmp14}, bounds for this parameter are given.
In this paper, merging the concepts studied in \cite{cherheslzh02,slater2}, we introduce  the  \emph{neighbor-locating colorings} and  the \emph{neighbor-locating chromatic number}, and  examine this parameter in some families of graphs.

\vspace{.3cm}
The paper is organized as follows.
In Section \ref{lp}, we define the neighbor-locating colorings  and  introduce the neighbor-locating chromatic number of a graph.
In Section \ref{bounds}, bounds for the neighbor-locating chromatic number of a graph are established in terms of its order, maximum degree and independence number.
In Section \ref{sec.ext}, we focus our attention on graphs with neighbor-locating chromatic number close to the order $n$. 
Concretely, we characterize all  graphs with neighbor-locating chromatic number equal to $n$ or to $n-1$.
Section \ref{ss.join} is devoted  to examining the neighbor-locating chromatic number for some graph operations: the join and the disjoint union.
 Section \ref{ss.split} is devoted to studying the  neighbor-locating chromatic number of connected split graphs  and Mycielski graphs.
 Finally, in Section \ref{op}, we pose several open problems.

\vspace{0.1cm}

\vspace{.3cm}
We introduce now some basic terminology.
All the graphs considered are undirected, simple and finite. 
The vertex set and the edge set of a graph $G$ are denoted by $V(G)$ and $E(G)$, respectively.
If $uv\in E(G)$, then we write $u \sim v$.
Let $v$ be a vertex of  $G$.
The \emph{open neighborhood} of $v$ is $\displaystyle N_G(v)=\{w \in V(G):vw \in E(G)\}$,
and the \emph{closed neighborhood} of $v$ is $N_G[v]=N(v)\cup \{v\}$.
The \emph{degree} of $v$ is $\deg_G(v)=|N_G(v)|$.
If $N_G[v]=V(G)$, then $v$ is called \emph{universal}.
If  $\deg_G(v)=1$, then $v$ is called a \emph{leaf}.
An \emph{isolated vertex} is a vertex of degree zero.
Let $W$ be a subset of vertices of a graph $G$.
The open neighborhood of $W$ is $\displaystyle N_G(W)=\cup_{v\in W} N_G(v)$, and the closed neighborhood of $W$ is $N_G[W]=N_G(W)\cup W$.
The subgraph of $G$ induced by $W$, denoted by $G[W]$, has $W$ as vertex set  and $E(G[W]) = \{vw \in E(G) : v \in W,w \in W\}$.
If a graph $H$ is an induced subgraph of $G$, then we write $H \prec G$.

\vspace{.3cm}
The distance between vertices $v,w\in V(G)$ is denoted by $d_G(v,w)$, or $d(v,w)$ if the graph $G$ is clear from the context.
The \emph{diameter} of $G$ is ${\rm diam}(G) = \max\{d(v,w) : v,w \in V(G)\}$.
The \emph{independence number} of $G$, denoted by $\alpha(G)$, is the maximum cardinality of an independent set of $G$.
For undefined terminology,  we refer the reader to \cite{chlezh11}.

\newpage
\section{Locating partitions}\label{lp}

\vspace{.2cm}
In this section, we present different kinds of locating partitions that have been extensively studied in recent years and that are related to the partitions that we introduce in this paper: the neighbor-locating colorings.

Given a connected graph $G$, a vertex $v\in V(G)$ and a set of vertices $S\subseteq V(G)$, the distance $d(v,S)$ between $v$ and  $S$ is $d(v,S)=\min\{d(v,w):w\in S\}$.
Given a  partition $\Pi=\{S_1,\ldots,S_k\}$ of $V(G)$,
we denote by $r(v |\Pi)$  the vector of distances between a vertex $v\in V(G)$ and the elements of  $\Pi$, that is, $r(v | \Pi)=(d(v,S_1),\dots ,d(v,S_k))$.
The partition $\Pi$ is called a  \emph{metric-locating partition},  an \emph{ML-partition} for short,   if, for any pair of distinct vertices $u,v\in V(G)$, $r(u | \Pi)\neq r(v | \Pi)$.
The \emph{partition  dimension} $\beta_p(G)$ of $G$ is the minimum  cardinality of an ML-partition of $G$.
Metric-locating partitions were introduced in \cite{ChaSaZh00}, and further studied in several papers: bounds \cite{chagiha08}, graph families
\cite{fegooe06,ferogo14,gsrm,gsrmw,hb15,jrsa,js08,mv,royele14,si,toim09,tjs07} and  graph operations \cite{bd,cy,da,gyro10,gyjakuta14,ryk,yjkt,yr}.

A partition $\Pi=\{S_1,\ldots,S_k\}$ of $V(G)$ is \emph{dominating} if, for every $i \in \{1,\ldots, k\}$ and for every vertex $v \in S_i$, $d(v,S_j)=1$, for some $j \in \{1,\ldots, k\}$.
The partition $\Pi$ is called a  \emph{metric-locating-dominating partition},  an \emph{MLD-partition} for short,  if it is both dominating and metric-locating.
The \emph{partition metric-location-domination number} $\eta_p(G)$ of $G$ is the minimum  cardinality of an MLD-partition of $G$.
In \cite{hemope16}, it was proved that $\beta_p (G) \le   \eta_p (G) \le  \beta_p (G)+1$.

Let $\Pi=\{S_1,\ldots,S_k\}$ be a partition of $V(G)$. 
If all the parts of  $\Pi$ are independent sets, then we say that $\Pi$ is a \emph{coloring} of $G$ and that the elements of $S_i$ are colored with color $i$. 
The \emph{chromatic number} $\chi(G)$  equals the minimum cardinality  of a  coloring of $G$.

A coloring $\Pi=\{S_1,\ldots,S_k\}$ is called a  \emph{(metric-)locating coloring}, an ML-coloring for short,  if for every $i \in \{1,\ldots, k\}$ and for every pair of distinct vertices $u,v\in S_i$, there exists  $j \in \{1,\ldots, k\}$ such that $d(u,S_j)\neq d(v,S_j)$. In other words, an ML-coloring $\Pi$  is  a coloring that is also an ML-partition.
The \emph{(metric-)locating-chromatic number} $\chi_{_{L}}(G)$  is the minimum  cardinality of an ML-coloring of  $G$. This parameter was introduced in \cite{cherheslzh02} and further studied in
\cite{bean15,beom11,beom16,cherheslzh03,hb15,pbas15,pbasb17,sba15,webasiut13,webasiut15,webasiut17}.

In this paper, we introduce a new type of locating coloring. 
If in the previous paragraph the location was in terms of distances (and just for connected graphs), now we focus our  attention on the neighbors.

\vspace{.2cm}
\begin{defi}
{\rm Let $G$ be a  graph $G$, not necessarily connected. 
	A coloring  $\Pi=\{ S_1,\dots ,S_k\}$ is called a  \emph{ neighbor-locating coloring}, an \emph{NL-coloring} for short, if for every pair of  different vertices $u,v$ belonging to the same color  $S_i$, the set of colors of the neighborhood of $u$ is different from the set of colors of the neighborhood of $v$, that is,
$ \{ j : 1\le j \le k, N(u)\cap S_j\not= \emptyset \} \not= \{j : 1\le j \le k, N(v)\cap S_j\not= \emptyset  \}.$

The \emph{neighbor-locating chromatic number} $\chi _{_{NL}}(G)$, the \emph{NLC-number} for short,  is the minimum  cardinality of an NL-coloring of $G$.}
\end{defi}

\vspace{.1cm}
\begin{remark}
	\rm{Let $\Pi$ be an NL-coloring of a graph $G$.
	If $G$ is a non-connected graph with isolated vertices, then $\Pi$ is not a dominating partition. 
	Conversely, if $G$ is  either a connected graph  or  a non-connected graph without isolated vertices, then  $\Pi$ is also a dominating partition.}
\end{remark}

\vspace{.1cm}
\begin{remark}
\rm{A coloring $\Pi=\{S_1,\ldots,S_k\}$ is an NL-\emph{coloring}  if, for every $i \in \{1,\ldots, k\}$ and for every pair of distinct non-isolated  vertices $u,v\in S_i$, there exists  $j \in \{1,\ldots, k\}$ such that  either $d(u,S_j)=1$ and  $d(v,S_j)\neq1$ or $d(u,S_j)\neq 1$ and  $d(v,S_j)=1$
and there is at most one isolated vertex of color $i$, for every $i\in \{ 1,\dots ,k\}$.}
\end{remark}

\vspace{.1cm}
\begin{remark}\label{z1}
{\rm Let $\Pi= \{ S_1,\dots ,S_k\}$ be a partition of the set $V(G)$ of vertices of a graph $G$.
For every vertex $x\in V(G)$, we define the tuple $nr(x|\Pi)=(x_1,\dots ,x_k)$
as follows}
$$x_i=
\left\{  \begin{array}{ll}0, &\hbox{ if }x\in S_i; \\ 1, &\hbox{ if }x\in N(S_i)\setminus S_i ;
\\ 2, & \hbox{ if }x\notin N[S_i] .
\end{array} \right. $$
{\rm Observe that, if $G$ is connected, then $x_i=\min \{ 2, d(x,S_i) \}$. Notice that  $nr(x|\Pi)$ has exactly one component equal to $0$. In fact, $S_i$ contains exactly all the vertices $x\in V(G)$ such that the i-th component of $nr(x|\Pi )$ is equal to $0$.
With this terminology, $\Pi$ is an NL-coloring if and only if
the sets $S_1,\dots ,S_k$ are independent and $nr(x|\Pi)\not= nr(y|\Pi)$, for every pair of distinct vertices $x$ and $y$.
Moreover, if $\Pi$ is an NL-coloring of $G$ and $x\in V(G)$ is a non isolated vertex, then the tuple $nr(x|\Pi)$ has at least one component equal to $1$. }
\end{remark}

Given a graph $G$, an NL-coloring $\Pi$ of $V(G)$ and two vertices $x,y \in V(G)$, if $nr(x|\Pi)\not= nr(y|\Pi)$, then  $x,y$ are said to be \emph{neighbor-located} by $\Pi$.

\vspace{.2cm}
\begin{remark}\label{z1}
\rm{If $G$ is a graph and $W\subseteq V(G)$ is the set of all isolated vertices of $G$, then}
$$  \chi _{_{NL}}(G)= \hbox{max}\{\chi _{_{NL}}(G[V(G)\setminus W]), \, |W|\}. $$
\end{remark}

The following chains of inequalities hold.

\begin{prop}\label{z0}
Let $G$ be a non-trivial graph. Then,
\begin{enumerate}[(1)]
\item
$2 \le \chi(G) \le \chi _{_{L}}(G) \le \chi _{_{NL}}(G) $.
\item
$2 \le \beta_p(G) \le \eta_p (G) \le \chi _{_{NL}}(G)$.
\item
$\chi _{_{NL}}(G) \le \chi(G)+ \lambda(G)$.

\end{enumerate}
\end{prop}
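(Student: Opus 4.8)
The plan is to handle the three chains separately: the first two follow quickly from the definitions (plus the inequality of \cite{hemope16} already quoted), while the third is the only part requiring an explicit construction.

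For item (1), the bound $2\le\chi(G)$ is immediate because a non-trivial graph has an edge, and $\chi(G)\le\chi_{_{L}}(G)$ holds since every ML-coloring is in particular a coloring. For $\chi_{_{L}}(G)\le\chi_{_{NL}}(G)$ I would show that every NL-coloring $\Pi=\{S_1,\dots,S_k\}$ of a connected graph is an ML-coloring. Let $u,v$ lie in a common color $S_i$. Since $S_i$ is independent, neither vertex has a neighbor in $S_i$, so the fact that their neighborhood color sets differ forces a color $j\neq i$ with, say, $N(u)\cap S_j\neq\emptyset$ and $N(v)\cap S_j=\emptyset$. Then $d(u,S_j)=1$ while $d(v,S_j)\ge 2$ (as $v\notin S_j$ and has no neighbor there), so $u$ and $v$ are metric-located and $\Pi$ is an ML-coloring.

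For item (2), the bound $2\le\beta_p(G)$ holds because the single-part partition $\{V(G)\}$ assigns every vertex the same distance vector and hence cannot metric-locate two distinct vertices, while $\beta_p(G)\le\eta_p(G)$ is exactly the inequality of \cite{hemope16} recalled above. For $\eta_p(G)\le\chi_{_{NL}}(G)$ I would argue that every NL-coloring of a connected graph is an MLD-partition: it is metric-locating by the computation just made (an ML-coloring is an ML-partition), and it is dominating by the first remark of this section. As the minimum defining $\eta_p$ ranges over a family containing all NL-colorings, the inequality follows.

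The substantive step is item (3), where I would build an NL-coloring using $\chi(G)+\lambda(G)$ colors. Let $S$ be a minimum locating-dominating set, so $|S|=\lambda(G)$, and fix a proper coloring of the induced subgraph $G[V(G)\setminus S]$ with colors from $\{1,\dots,\chi(G)\}$, which is possible since $\chi(G[V(G)\setminus S])\le\chi(G)$. Assign to each vertex of $S$ its own private color taken from a fresh palette $\{\chi(G)+1,\dots,\chi(G)+\lambda(G)\}$. The resulting partition $\Pi$ is a coloring, because the classes inside $V(G)\setminus S$ are independent and each class inside $S$ is a singleton. To check that $\Pi$ is neighbor-locating, observe that two vertices sharing a color must both lie in $V(G)\setminus S$, since the vertices of $S$ receive pairwise distinct colors. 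For such $u,v$ the locating-dominating property gives $N(u)\cap S\neq N(v)\cap S$; as the coloring restricted to $S$ is injective and uses colors disjoint from those on $V(G)\setminus S$, the colors from the fresh palette occurring in the two neighborhoods are precisely the images of $N(u)\cap S$ and $N(v)\cap S$, hence differ. Thus $u$ and $v$ are neighbor-located, $\Pi$ is an NL-coloring, and $\chi_{_{NL}}(G)\le\chi(G)+\lambda(G)$. The one delicate point throughout is here: one must keep the palette on $S$ disjoint from the palette on $V(G)\setminus S$, so that differences in $N(\cdot)\cap S$ are visible in the neighborhood color sets; using $\lambda(G)$ brand-new colors for the singletons of $S$ makes this transparent, and the remaining verifications are routine.
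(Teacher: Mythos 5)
Your proposal is correct and takes essentially the same approach as the paper: items (1) and (2) are obtained as direct consequences of the definitions (an NL-coloring of a connected graph is an ML-coloring, hence an ML-partition, and it is dominating, hence an MLD-partition), and for item (3) you use exactly the paper's construction, namely a proper coloring of $G[V(G)\setminus W]$ for a minimum locating-dominating set $W$ together with a fresh singleton color for each vertex of $W$. Your write-up simply makes explicit the verifications the paper leaves implicit, in particular that keeping the palette on $W$ disjoint ensures the difference $N(u)\cap W\neq N(v)\cap W$ is visible in the neighborhood color sets.
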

\begin{proof}
Items (1) and (2)  are a direct consequence of the definitions.
To prove  (3), take  a minimum locating-dominating set $W$ of $G$. 
Let H be the subgraph of $G$ induced by $V(G)\setminus W$. 
Take  a $k$-coloring $\Pi_{H}=\{S_1, \dots, S_{k}\}$ of   $H$, where $k=\chi(H)$. 
Then, $\Pi=\Pi_{H}\cup \{\{w\} \ : \ w \in W\}$ is an NL-coloring of $G$. As $\chi(H) \leq \chi(G)$, we obtain that the inequality  is satisfied.
\end{proof}

\begin{figure}[!ht]
\begin{center}
\includegraphics[width=\textwidth]{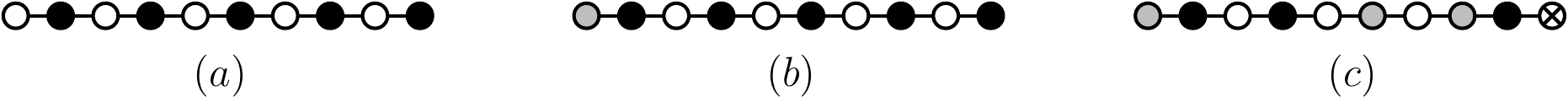}
\caption{
(a) $\chi(P_{10})=2$; (b) $\chi _{_{L}}(P_{10})=3$ and (c) $\chi _{_{NL}}(P_{10})=4$.}
\label{P10}
\end{center}
\end{figure}

{\begin{prop}
	For each pair $h$, $k$ of integers with $3 \le  h \le k$, there exists a connected graph $G$   with $\chi(G)=h$ and $\chi _{_{NL}}(G)=k$.
\end{prop}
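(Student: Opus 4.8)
The plan is to build $G$ from a clique that pins down the chromatic number, together with a bundle of mutually indistinguishable pendant vertices that force the neighbor-locating chromatic number up to $k$. Concretely, I would take $G$ to be the complete graph $K_h$ on vertices $c_1,\dots,c_h$ together with $k-1$ leaves $\ell_1,\dots,\ell_{k-1}$, each joined only to $c_1$. This graph is connected, and since every leaf has degree $1$ while $\{c_1,\dots,c_h\}$ induces a clique, a greedy argument gives $\chi(G)=h$: the clique needs $h$ colors, and as $h\ge 2$ each leaf can always avoid the single color of $c_1$. Note this construction is uniform in $k\ge h$ (when $k=h$ it produces $h-1$ leaves and no new colors).

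To obtain $\chi_{_{NL}}(G)\le k$, I would exhibit an explicit NL-coloring with $k$ colors. Assign color $i$ to $c_i$ for $1\le i\le h$, give the first $h-1$ leaves the colors $2,\dots,h$ (one each), and give the remaining $k-h$ leaves the fresh colors $h+1,\dots,k$. Every color class is independent, because no leaf is adjacent to any $c_i$ with $i\ne 1$. The only classes containing two vertices are $\{c_i,\ell\}$ with $2\le i\le h$; here $c_i$ sees color $1$ and, because $h\ge 3$, at least one further clique color, so its neighbor-color set has at least two elements, whereas $\ell$ has neighbor-color set $\{1\}$. Hence each such pair is neighbor-located, and the coloring is a valid NL-coloring using exactly $k$ colors.

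For the matching lower bound $\chi_{_{NL}}(G)\ge k$, the key observation is that each leaf is adjacent only to $c_1$, so in any NL-coloring every leaf has the singleton neighbor-color set $\{c(c_1)\}$, where $c(c_1)$ denotes the color of $c_1$. Two leaves sharing a color would then have identical neighbor-color sets, contradicting the NL-condition; thus the $k-1$ leaves occupy $k-1$ pairwise distinct color classes. Since every leaf is adjacent to $c_1$, none of these classes is the class of $c_1$, which supplies a $k$-th distinct class. Hence at least $k$ colors are used, and together with the upper bound this gives $\chi_{_{NL}}(G)=k$.

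The technical heart of the argument is the simultaneous control of the two parameters: the pendant bundle must raise $\chi_{_{NL}}$ without raising $\chi$. The leaves achieve exactly this, since they behave as twins for the NL-condition (identical neighbor-color behavior) yet are harmless for ordinary coloring. The one place where the hypothesis $h\ge 3$ is genuinely used---and the step I would check most carefully---is the verification that the reused-color classes $\{c_i,\ell\}$ remain neighbor-located, which requires $c_i$ to have at least two distinct neighbor colors and fails for $h=2$. Everything else reduces to a routine count showing the optimal overlap between the clique palette and the leaf palette forces precisely $\max(h,k)=k$ colors.
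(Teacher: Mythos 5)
Your construction is exactly the one the paper uses (the complete graph $K_h$ with $k-1$ leaves hung on a single vertex), and your verification of $\chi(G)=h$ and $\chi_{_{NL}}(G)=k$ --- including the leaf-counting lower bound and the check that the classes $\{c_i,\ell\}$ are neighbor-located, where $h\ge 3$ is needed --- is correct. The paper states the construction without detail, so your write-up is simply a fully worked version of the same argument.
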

	\begin{proof}
		It is enough to consider the graph obtained from the complete graph $K_h$ by hanging $k$-1 leaves to one of its vertices.
	\end{proof}

\newpage
\section{Bounds}\label{bounds}

This section is devoted to establishing some bounds involving the NLC-number, the order, the diameter and the independence number of a graph.
We begin with some properties of the NLC-number depending on the diameter of the graph.

\begin{prop}\label{d2d4}
	Let $G$ be a  connected graph of order $n\ge3$.
	\begin{enumerate}[{\rm (1)}]
		\item If ${\rm diam}(G)\le2$, then  $\chi _{_{L}}(G)=\chi _{_{NL}}(G)$.
		\item If ${\rm diam}(G)\ge4$, then  $\chi _{_{NL}}(G)\le n-2$.
	\end{enumerate}
\end{prop}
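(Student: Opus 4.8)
For part (1) the plan is to upgrade the inequality $\chi_{_{L}}(G)\le \chi_{_{NL}}(G)$ of Proposition \ref{z0}(1) to an equality by showing that, under the hypothesis ${\rm diam}(G)\le 2$, the two notions of coloring literally coincide. The key observation is that in a connected graph of diameter at most $2$ every distance $d(x,S_j)$ lies in $\{0,1,2\}$, so that $r(x\mid\Pi)=nr(x\mid\Pi)$ for every coloring $\Pi$ and every vertex $x$ (this is exactly the remark in \ref{z1} that $x_i=\min\{2,d(x,S_i)\}$, with the minimum now inactive). Since an ML-coloring is precisely a coloring whose distance vectors $r(\cdot\mid\Pi)$ are pairwise distinct, and an NL-coloring is precisely a coloring whose tuples $nr(\cdot\mid\Pi)$ are pairwise distinct (Remark \ref{z1}), the identity $r=nr$ forces a coloring to be ML if and only if it is NL. Taking minimum cardinalities yields $\chi_{_{L}}(G)=\chi_{_{NL}}(G)$.

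For part (2) I would argue by exhibiting an explicit NL-coloring with $n-2$ colors. Since ${\rm diam}(G)\ge 4$, choose two vertices at distance at least $4$ and a shortest path between them; its first five vertices $x_0,x_1,x_2,x_3,x_4$ form a geodesic, so $d(x_i,x_j)=|i-j|$ for all $0\le i,j\le 4$. Define $\Pi$ by placing $S_1=\{x_0,x_2,x_4\}$ in one color and giving each of the remaining $n-3$ vertices its own singleton color, for a total of $n-2$ colors. The class $S_1$ is independent because its three vertices are pairwise at distance $2$ or $4$.

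It then remains only to verify that $\Pi$ is an NL-coloring, and this is the heart of the argument. Because every color other than $S_1$ is a singleton, the definition requires checking only that $x_0,x_2,x_4$ are pairwise neighbor-located. Here the geodesic acts as a ``ruler'' providing private separators: $x_1$ is adjacent to $x_0$ and $x_2$ but (being at distance $3$) not to $x_4$, while $x_3$ is adjacent to $x_2$ and $x_4$ but not to $x_0$. Reading off the coordinates of $nr(\cdot\mid\Pi)$ indexed by the singleton colors of $x_1$ and $x_3$, one gets the pairs $(1,2)$, $(1,1)$, $(2,1)$ for $x_0$, $x_2$, $x_4$ respectively, which are pairwise distinct; hence the three vertices are neighbor-located and $\Pi$ is an NL-coloring, giving $\chi_{_{NL}}(G)\le n-2$.

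The main obstacle I anticipate is entirely in part (2): one must make sure the exact distances along the geodesic guarantee both that $\{x_0,x_2,x_4\}$ is independent and that $x_1,x_3$ genuinely separate the three merged vertices regardless of what other edges $G$ has. All of this follows from the single fact that a subpath of a shortest path is shortest (so $d(x_i,x_j)=|i-j|$), which is why five consecutive geodesic vertices — available precisely when ${\rm diam}(G)\ge 4$ — suffice. Part (1), by contrast, is conceptually the clean observation that the distance profile truncates at $2$, making the metric and neighbor conditions identical.
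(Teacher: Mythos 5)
Your proof is correct and takes essentially the same approach as the paper: part (1) is the identical observation that ${\rm diam}(G)\le 2$ forces $nr(x\mid\Pi)=r(x\mid\Pi)$, so that ML- and NL-colorings coincide and the inequality of Proposition~\ref{z0} becomes an equality, while part (2) is the same construction of merging vertices along a geodesic of length $4$ to save two colors, with the NL-condition verified via adjacency to interior path vertices. The only (immaterial) difference is the grouping in part (2): you use the single class $\{x_0,x_2,x_4\}$ with $x_1,x_3$ as singleton separators, whereas the paper uses the two doubleton classes $\{u,x_3\}$ and $\{v,x_1\}$ plus the singleton $\{x_2\}$; both verifications rest on the same fact that subpaths of shortest paths are shortest.
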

\begin{proof}
	\begin{enumerate}[{\rm (1)}]
\item
Let $\Pi=\{ S_1,\dots ,S_k \}$ be an ML-coloring of $G$.
	If $\diam (G)=2$, then for every $x\in V$ we have $\min \{ 2, d(x,S_i) \}=d(x,S_i)$.
	Hence, $nr(x|\Pi)=r(x|\Pi)$, for every $x\in V(G)$,  that is, $\Pi$ is also an NL-coloring of $G$.
	Thus, $\x (G)\le \chi_{_{L}} (G)$, and by Proposition~\ref{z0}, we have $\x (G) = \chi_{_{L}} (G)$.


\item If $\diam (G)\ge 4$, then there exist vertices $u,v\in V(G)$ such that $d(u,v)=4$.
		Take $x_1,x_2,x_3\in V(G)$ such that the set $\{u,x_1,x_2,x_3,v\}$ induces a shortest path joining vertices $u$ and $v$, and $d(u,x_3)=d(u,x_2)+1=d(u,x_1)+2=3$.
		The partition $\Pi=\{S_1,S_2, S_3\} \cup \{ \{ z \}: z\notin S_1\cup S_2\cup S_3 \}$, where $S_1=\{u,x_3\}$, $S_2=\{v,x_1\}$ and $S_3=\{x_2\}$, is clearly a coloring of $G$. We claim that $\Pi$ is also an NL-coloring. Indeed,
		$nr(u|\Pi)=(0,1,2,\dots )\not= (0,1,1,\dots )=nr(x_3|\Pi)$ and $nr(v|\Pi)=(1,0,2,\dots )\not= (1,0,1,\dots )=nr(x_1|\Pi)$.
Hence,  $\chi _{_{NL}}(G)\le n-2$.
	\end{enumerate}
	\vspace{-0.5cm}\end{proof}

\begin{theorem}\label{gb}
Let $G$ be a non-trivial graph of order $n$ and maximum degree $\Delta$.
If $\chi _{_{NL}}(G)=k$ and $\Delta\le k-1$, then

\begin{enumerate}[{\rm(1)}]
\item [{\rm(1)}]$\displaystyle{n\le k\, 2^{k-1}}$ \hspace{.1cm} and \hspace{.1cm} {\rm(2)}
  $\displaystyle{n\le k\, \sum_{j=0}^{\Delta } \binom{k-1}{j}}.$
\end{enumerate}
In addition, if $G$ has no isolated vertices, then:
\begin{enumerate}[{\rm(1)}]
\item[{\rm(3)}] $\displaystyle{n\le k\, (2^{k-1}-1)}$ \hspace{.1cm} and \hspace{.1cm} {\rm(4)}
 $\displaystyle{n\le k\, \sum_{j=1}^{\Delta } \binom{k-1}{j}}.$
\end{enumerate}
\end{theorem}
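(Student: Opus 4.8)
The plan is to fix a minimum NL-coloring $\Pi=\{S_1,\dots,S_k\}$ and, for each color class $S_i$, bound $|S_i|$ by counting the number of distinct tuples $nr(x|\Pi)$ that a vertex $x\in S_i$ can possibly realize. The NL-property guarantees that the map $x\mapsto nr(x|\Pi)$ is injective on each $S_i$ (indeed, on all of $V(G)$), so any such count is an upper bound on $|S_i|$; summing over the $k$ classes then yields the bound on $n$. The whole argument is a clean counting argument, so the main point is not a hidden difficulty but rather correctly describing the admissible tuples.

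First I would observe that for $x\in S_i$ the tuple $nr(x|\Pi)$ has exactly one coordinate equal to $0$, namely the $i$-th one, and no other coordinate can be $0$ (since the $S_j$ partition $V(G)$). Hence every remaining coordinate $x_j$ with $j\neq i$ lies in $\{1,2\}$: it equals $1$ if $x$ has a neighbor in $S_j$ and $2$ otherwise. An admissible tuple for color $i$ is therefore determined by an arbitrary choice of $1$ or $2$ in each of the $k-1$ positions $j\neq i$, giving at most $2^{k-1}$ possibilities. This proves $|S_i|\le 2^{k-1}$ and hence $n=\sum_{i=1}^k|S_i|\le k\,2^{k-1}$, which is (1). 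For (2) I would refine this count using the degree constraint: the positions $j$ with $x_j=1$ are precisely the colors appearing in $N(x)$, of which there are at most $\deg_G(x)\le\Delta$. Thus each admissible tuple has at most $\Delta$ ones among its $k-1$ free positions, so the number of such tuples is $\sum_{j=0}^{\Delta}\binom{k-1}{j}$, giving (2). Note (1) is recovered from (2) by $\Delta\le k-1$, since $\sum_{j=0}^{\Delta}\binom{k-1}{j}\le\sum_{j=0}^{k-1}\binom{k-1}{j}=2^{k-1}$.

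Finally, for (3) and (4) I would use the hypothesis that $G$ has no isolated vertices: every vertex then has a neighbor, so as observed in the remarks following the definition, the tuple $nr(x|\Pi)$ has at least one coordinate equal to $1$. This excludes exactly the single tuple having $0$ in position $i$ and $2$ in every other position. Consequently each per-class count drops by one, giving $|S_i|\le 2^{k-1}-1$ for (3) and $|S_i|\le\sum_{j=1}^{\Delta}\binom{k-1}{j}$ for (4); summing over the $k$ classes produces the two stated bounds. The only step requiring care is verifying that the injectivity of $x\mapsto nr(x|\Pi)$ restricted to $S_i$ is genuinely what the NL-property provides, which is immediate from the characterization in the remarks, so no substantive obstacle remains beyond this bookkeeping.
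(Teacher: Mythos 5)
Your proposal is correct and follows essentially the same route as the paper: fix a $k$-NL-coloring, use the injectivity of $x\mapsto nr(x|\Pi)$ on each color class $S_i$ (noting the unique $0$ in position $i$ and entries in $\{1,2\}$ elsewhere), count the admissible tuples per class---at most $\Delta$ ones for (2) and (4), at least one $1$ when there are no isolated vertices for (3) and (4)---and sum over the $k$ classes. No gaps; this matches the paper's counting argument in both structure and detail.
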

\begin{proof} Let $\Pi= \{ S_1,\dots ,S_k\}$ be a k-NL-coloring of $G$.  An upper bound of the order of $G$ is given by the maximum number of
suitable $k$-tuples $nr(x|\Pi)$, $x\in V(G)$.

\begin{enumerate}
\item[{\rm(1)}]
There are at most $2^{k-1}$ possible tuples  with  the $i$-th component equal to $0$ and the remaining components equal to $1$ or $2$.
Therefore, $|S_i|\le 2^{k-1}$.
Hence, $n=|V(G)|=\sum_{i=1}^k |S_i|\le \sum_{i=1}^k 2^{k-1}=k\, 2^{k-1}$.

\item[{\rm(2)}]
If  $\Delta \le k-1$ and $x\in S_i$, then the $i$-th component of the $k$-tuple $nr(x|\Pi)$ is $0$ and at most $\Delta $ components are equal to 1. Therefore,
$|S_i|\le \sum_{j=0}^{\Delta } \binom{k-1}{j}$, and the upper bound follows.

\item[{\rm(3)}]
In this case,  for every $x\in V(G)$, the k-tuple $nr(x|\Pi)$ has at least one component          equal to $1$.
There are  $2^{k-1}-1$ $k$-tuples  with  the $i$-th component          equal to $0$ and the remaining components equal to $1$ or $2$, but not all them equal to $2$.
Hence, $n=|V(G)|=\sum_{i=1}^k |S_i|\le \sum_{i=1}^k (2^{k-1}-1)=k\, (2^{k-1}-1)$.

\item[{\rm(4)}]
If $\Delta \le k-1$ and $G$ has no isolated vertices, then, for every $x\in S_i$, the $i$-th component of the $k$-tuple $nr(x|\Pi)$ is $0$, and the number of components which are equal to $1$ is at least $1$ and at most  $\Delta $. 
Therefore, $|S_i|\le \sum_{j=1}^{\Delta } \binom{k-1}{j}$, and the upper bound follows.
\end{enumerate}
\vspace{-.8cm}\end{proof}

Notice that the bounds displayed in items (3) and (4) of Theorem \ref{gb} apply also for connected graphs.

Next, for every integer $k\ge 3$, we build a connected graph $G_k=(V_k,E_k)$
of maximum order with NLC-number $k$.
The set $V_k$ of vertices of $G_k$ is the set of all words of length $k$ in the alphabet $\{ 0,1,2 \}$ having exactly one $0$ and at least one $1$. 
To define the edges of $G_k$, let $W_i$ be the set of words $x_1\dots x_k\in V_k$ such that $x_i=0$, for every $i\in \{ 1,\dots ,k\}$, so that $\{W_1,\dots ,W_k\}$ is a partition of $V_k$.
For every $x,y\in V_k$, if $x=x_1\dots x_k\in W_i$ and $y=y_1\dots y_k\in W_j$, then $xy\in E_k$ if and only if $i\neq j$, $x_j=1$ and $y_i=1$ (see an illustration of graph $G_3$ in Figure~\ref{G3}).

\begin{figure}[!ht]
\begin{center}
\includegraphics[width=0.60\textwidth]{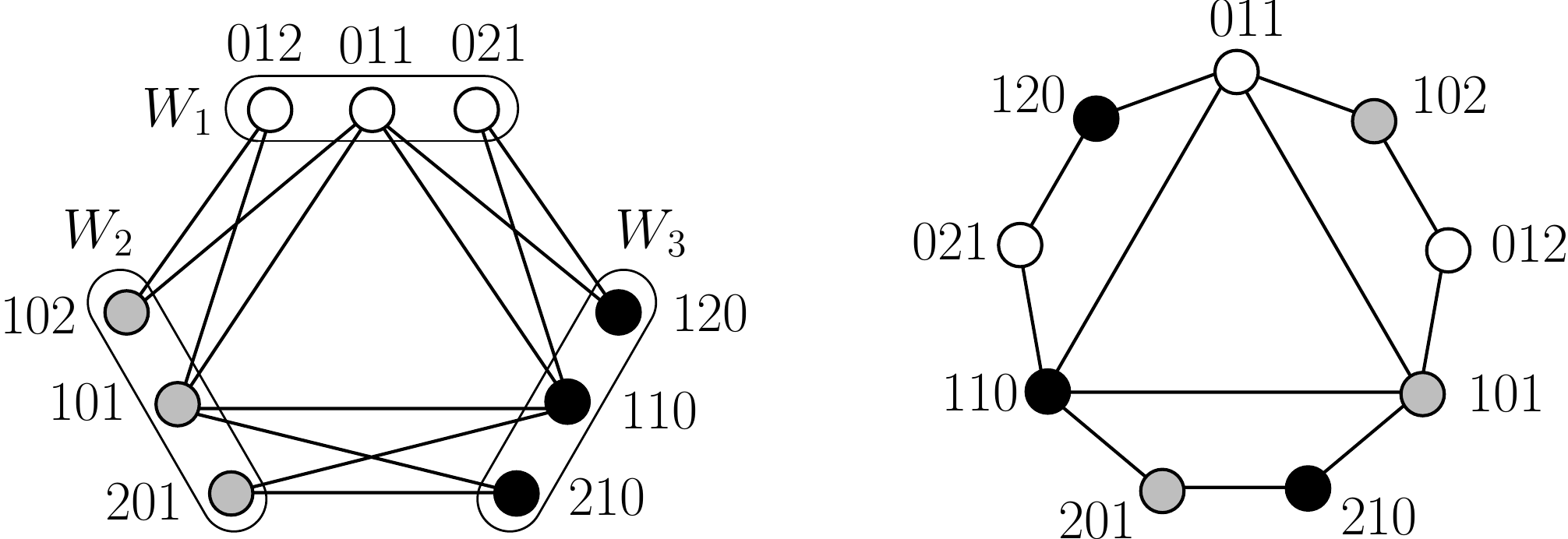}
\caption{Two representations of the graph $G_3$.}\label{G3}
\end{center}
\end{figure}

\vspace{.3cm}\noindent Let $n_{{_k}}=k\cdot (2^{k-1}-1)$, $m_{{_k}}=k\cdot (k-1) \cdot 2^{2k-5}$,  $\delta_{{_k}}=2^{k-2}$ and $\Delta_{{_k}}=(k-1)\cdot 2^{k-2}$.

\begin{prop}\label{Gk}
For every integer  $k\ge 3$,
$G_k$ is a connected graph of
order $n_{{_k}}$,
size $m_{{_k}}$,
diameter $3$, minimum degree $\delta_{{_k}}$, maximum degree $\Delta_{{_k}}$
such that $\chi _{_{NL}}(G_k)=k$.
\end{prop}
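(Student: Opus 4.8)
The plan is to verify the six assertions (order, size, diameter, minimum and maximum degree, and NLC-number) by exploiting a single structural feature of the construction: under the natural coloring $\Pi=\{W_1,\dots,W_k\}$, each vertex \emph{is} its own neighbor-location tuple. First I would record that $|W_i|=2^{k-1}-1$, since a word of $W_i$ is determined by filling the $k-1$ positions other than $i$ with symbols from $\{1,2\}$, not all equal to $2$; summing over $i$ gives $n=|V_k|=k(2^{k-1}-1)=n_{{_k}}$. Each $W_i$ is independent because the adjacency rule only joins words lying in different parts, so $\Pi$ is indeed a coloring.

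The heart of the argument is to show that $\Pi$ is an NL-coloring with $nr(x\mid\Pi)=x$ for every $x\in V_k$. Fix $x\in W_i$. By the edge rule, for $j\ne i$ the word $x$ has a neighbor in $W_j$ if and only if $x_j=1$ and there is some $y\in W_j$ with $y_i=1$; since such a $y$ always exists, the colors appearing in $N(x)$ are exactly $\{j:x_j=1\}$. Interpreting this through the definition of the tuple $nr(x\mid\Pi)$, its $i$-th coordinate is $0$ (as $x\in S_i$), a coordinate $j\ne i$ equals $1$ precisely when $x_j=1$, and equals $2$ precisely when $x_j=2$. Hence $nr(x\mid\Pi)=x$. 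Because distinct words are distinct tuples, $\Pi$ neighbor-locates all vertices, so $\chi_{_{NL}}(G_k)\le k$. For the reverse inequality I would note that $G_k$ has no isolated vertex (its minimum degree, computed below, is $2^{k-2}\ge 1$), so Theorem~\ref{gb}(3) applies and yields $n_{{_k}}\le \chi_{_{NL}}(G_k)\,(2^{\chi_{_{NL}}(G_k)-1}-1)$; since $t\mapsto t(2^{t-1}-1)$ is strictly increasing, this forces $\chi_{_{NL}}(G_k)\ge k$, and therefore $\chi_{_{NL}}(G_k)=k$. In effect, $G_k$ is exactly the extremal graph meeting the bound of Theorem~\ref{gb}(3).

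Next I would compute degrees and size from the same support description. For $x\in W_i$ and each $j\ne i$ with $x_j=1$, the neighbors of $x$ in $W_j$ are the words $y\in W_j$ with $y_i=1$, of which there are exactly $2^{k-2}$ (fix $y_i=1$, $y_j=0$, and fill the remaining $k-2$ positions freely). Hence $\deg(x)=2^{k-2}\cdot|\{j\ne i:x_j=1\}|$; as the number of $1$'s among the free positions ranges between $1$ and $k-1$, this gives $\delta_{{_k}}=2^{k-2}$ and $\Delta_{{_k}}=(k-1)\,2^{k-2}$. Summing $\deg(x)$ over all $x$ --- equivalently, counting for each part the total number of $1$'s, which is $(k-1)\,2^{k-2}$ per part --- yields $2m_{{_k}}=k(k-1)\,2^{2k-4}$, that is, $m_{{_k}}=k(k-1)\,2^{2k-5}$.

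Finally, for connectedness and the diameter I would introduce the $k$ ``full'' words $\mathbf 1_i$ having $0$ in position $i$ and $1$ elsewhere. These form a clique, and every $x\in W_i$ is adjacent to $\mathbf 1_j$ for any $j$ with $x_j=1$ (such a $j$ exists since $x$ has at least one $1$); thus any two vertices are joined by a path of length at most $3$ through this hub clique, giving $\diam(G_k)\le 3$ and connectedness. For the matching lower bound I would exhibit two distinct vertices of the same part $W_i$ whose sets of $1$-positions are disjoint (possible for $k\ge 3$, e.g.\ single $1$'s in two distinct free positions); such vertices are non-adjacent and, as the common-neighbor analysis shows, share no neighbor, so their distance is at least $3$, forcing $\diam(G_k)=3$. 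The step requiring the most care is pinning down the diameter exactly: the upper bound via the hub clique is immediate, but the lower bound rests on the observation that two same-color vertices are at distance $2$ \emph{precisely} when their $1$-supports intersect, which is where the analysis of common neighbors must be carried out carefully.
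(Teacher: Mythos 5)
Your proof is correct, and while your computations of the order, the degrees, and the NLC-number essentially coincide with the paper's, you take a genuinely different route on the diameter (and, mildly, on the size). For the size, the paper counts edges directly between each pair of parts ($2^{k-2}\cdot 2^{k-2}=2^{2k-4}$ per pair, times $\binom{k}{2}$), whereas you sum degrees via the handshake lemma; both are routine. The real divergence is the diameter: the paper proves $\diam(G_k)\le 3$ by a lengthy case analysis on same-part versus different-part pairs, writing down explicit paths (including a separate subcase that needs $k\ge 4$), while you route everything through the hub clique $\{\mathbf{1}_1,\dots,\mathbf{1}_k\}$ of words with a single $0$ and all remaining entries $1$: every vertex has at least one $1$, hence is adjacent to some hub vertex, and the hub is a clique, so any two vertices are within distance $3$ and connectedness comes for free. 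Your lower bound --- a same-part pair with disjoint $1$-supports is non-adjacent and has no common neighbor, by the criterion that same-part vertices are at distance $2$ if and only if their supports meet --- is the same observation the paper makes, but combined with the hub bound it replaces the paper's explicit length-$3$ geodesic; your argument is shorter and uniform, at the cost of not exhibiting concrete shortest paths. Your formulation of the upper bound for the NLC-number, namely that $nr(x\mid\Pi)=x$ for every vertex under the natural coloring, is sharper than the paper's pairwise separation argument (which finds a $j$ with $x_j=1$, $y_j=2$ and notes only $x$ sees color $j$), and it makes the extremality of $G_k$ transparent; indeed it is exactly the identification the paper exploits later in Proposition~\ref{subgrafo}. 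One shared caveat: like the paper, you invoke Theorem~\ref{gb}(3) for the lower bound although its statement carries the hypothesis $\Delta\le k-1$, which $G_k$ violates for the hypothetical value $t=\chi_{_{NL}}(G_k)<k$; this is harmless because the proof of item (3) never uses the degree hypothesis, and your explicit appeal to the monotonicity of $t\mapsto t(2^{t-1}-1)$ makes the contrapositive step more precise than the paper's one-line assertion.
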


\begin{proof}
For every $i\in \{ 1,\dots, k\}$, we have $|W_i|=2^{k-1}-1$ because
there are $2^{k-1}$ words in the alphabet $\{ 0,1,2 \}$ with exactly one 0 in the position $i$, and only one of these words has no 1's.
Hence, $|V_k|=k\cdot (2^{k-1}-1)=n_k$.

Let $x\in W_1$ and $y\in W_2$.
Notice that $xy\in E_k$ if and only if $x=01x_3\ldots x_k$ and $y=10y_3 \ldots y_k$.
Hence, the number of edges with an endpoint in $W_1$ and the other in $W_2$ is $(2^{k-2})^2=2^{2k-4}$.
By symmetry, the number of edges with an endpoint in $W_i$ and the other in $W_j$ is $2^{2k-4}$ for every pair $i,j\in \{1,\dots ,k\}$ with $i\not=j$.
Since the sets $W_i$ are independent in $G_k$, we have $|E_k|=\binom{k}{2}2^{2k-4}=k\cdot (k-1) \cdot 2^{2k-5}=m_k$.

Now, let $x\in W_i$.  If $x_j=1$, then $x$ is adjacent to every vertex $y\in W_j$ such that $y_i=1$. There are $2^{k-2}$ such vertices in $W_j$.
Thus, $\deg (x)= |\{ j: x_j=1\}| \, 2^{k-2}$.
Hence, the minimum degree is attained by the vertices $x=x_1\dots x_k\in V_k$ such that $ |\{ j: x_j=1\}|=1$, whereas the maximum degree is attained when $ |\{ j: x_j=1\}|=k-1$.
Therefore, we have $\delta_k= 2^{k-2}$ and $\Delta_k=(k-1)2^{k-2}$.

Let $k\ge 3$. To prove that $G_k$ has diameter $3$, we show that the distance between any two different vertices
$x,y\in V_k$ is at most 3 and at least two of them are at distance 3. We distinguish two cases.

If $x,y\in W_i$ for some $i\in \{1,\dots ,k\}$, then we may assume without loss of generality that  $x=0x_2\dots x_k\in W_1$ and $y=0y_2\dots y_k\in W_1$.
Observe that $d(x,y)=2$ if and only if $x$ and $y$ have a common neighbor and, by definition of $G_k$, this happens if and only if $x_j= y_j=1$ for some $j\in \{2,\dots ,k\}$. 
If this condition does not hold, then $x_h=y_l=1$ and $x_l=y_h=2$, for some $h,l\in \{2,\dots ,k\}$.  
Without loss of generality, we may assume $h=2$ and $l=3$. 
In such a case,  $x=012x_4\dots x_k$, $y=021y_4\dots y_k$, and $d(x,y)=3$ because:
$$x=012x_4\dots x_k\sim 101y_4\dots  y_k\sim 110y_4\dots y_k \sim 021y_4\dots y_k=y.$$

If $x\in W_i$ and $y\in W_j$ for some $i,j\in \{1,\dots ,k\}$, $i\not= j$, then we may assume by symmetry that  $x=0x_2x_3\dots x_k\in W_1$ and $y=y_10y_3\dots y_k\in W_2$.
If $x_2=y_1=1$, then $xy\in E_k$ and $d(x,y)=1$.
If $x_2=1$ and $y_1=2$, then $y_l=1$ for some $l\in \{3,\dots ,k\}$, and $d(x,y)\le 3$ since:
$$x=01x_3\ldots x_k \sim 10y_3\ldots \stackrel{l)}{1} \ldots y_k \sim 21y_3\ldots \stackrel{l)}{0} \ldots y_k \sim 20y_3\ldots \stackrel{l)}{1} \ldots y_k =y.$$
In a similar way, we can prove that $d(x,y)\le 3$ if $x_2=2$ and $y_1=1$.
It only remains to consider the case $x_2=y_1=2$.
If $x_l=y_l=1$ for some $l\in \{ 3,\dots ,k\}$, then $d(x,y)\le 2$ because:
$$x=02x_3\ldots \stackrel{l)}{1} \ldots x_k\sim 11y_3\ldots \stackrel{l)}{0} \ldots y_k\sim 20y_3\ldots \stackrel{l)}{1}\ldots y_k=y.$$
Otherwise, $k\ge 4$ and there exist $h,l\in \{ 3,\dots ,k\}$ such that $x_h=y_l=1$ and $x_l=y_h=2$. We may assume without loss of generality that
$h=3$ and $l=4$. Then, $d(x,y)\le 3$ since
$$x=0212x_5\dots x_k\sim 1101y_5\dots y_k \sim 1110y_5\dots y_k \sim 2021y_5\dots y_k.$$

Finally, to see that $\chi _{_{NL}}(G_k)=k$, observe first that, according to Theorem \ref{gb} (3),
since $G_k$ is a connected graph of order $|V_k|=n_k=k\cdot (2^{k-1}-1)>(k-1) (2^{(k-1)-1}-1)$, we have $ \chi _{_{NL}}(G_k)\ge k$.
Besides,  $\{W_1,\ldots, W_k\}$ is clearly an NL-coloring of $G_k$.
Indeed, $W_1,\ldots, W_k$ are independent sets. 
Moreover, if $x$ and $y$ are two different vertices of $W_i$, then $x_j\not= y_j$ for some $j\not= i$. Hence, $\{x_j,y_j\}=\{ 1,2\}$. 
We may assume without loss of generality that $x_j=1$ and $y_j=2$.  Then, $x$ has a neighbor in $W_j$, but $y$ has no neighbor in $W_j$.
Therefore, $\{W_1,\ldots, W_k\}$ is an NL-coloring, implying that $\chi _{_{NL}}(G_k) = k$.
\end{proof}

\begin{cor}
The bounds displayed in items {\rm(1)} and {\rm(3)} of Theorem \ref{gb} are tight, for every $k\ge 3$.
\end{cor}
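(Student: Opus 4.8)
The plan is to exhibit, for each $k\ge 3$, explicit graphs attaining equality in bounds (1) and (3), building on the family $G_k$ from Proposition~\ref{Gk}. A preliminary observation makes this clean: the proofs of items (1) and (3) of Theorem~\ref{gb} only count the number of admissible tuples $nr(x\mid\Pi)$ and never actually use the hypothesis $\Delta\le k-1$ (that hypothesis is needed only for items (2) and (4)). Consequently both inequalities hold for \emph{every} graph with $\chi_{_{NL}}=k$ (respectively, for every such graph without isolated vertices), so it suffices to produce witnesses of equality, even if these witnesses have large maximum degree.

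For item (3) I would simply take $G=G_k$. By Proposition~\ref{Gk}, $G_k$ is connected, hence has no isolated vertices, its order is $n_{{_k}}=k\,(2^{k-1}-1)$, and $\chi_{_{NL}}(G_k)=k$. Therefore $G_k$ meets the bound $n\le k\,(2^{k-1}-1)$ with equality, proving that item (3) is tight. For item (1), the discrepancy between $2^{k-1}$ and $2^{k-1}-1$ admissible tuples per color class is exactly the tuple with a single $0$ and all other entries equal to $2$, a pattern realized only by an isolated vertex. Accordingly, I would let $G_k^{*}$ be the disjoint union of $G_k$ with $k$ isolated vertices. Writing $W$ for the set of these $k$ isolated vertices, the remark giving $\chi_{_{NL}}(G)=\max\{\chi_{_{NL}}(G[V(G)\setminus W]),\,|W|\}$ (Remark~\ref{z1}) yields $\chi_{_{NL}}(G_k^{*})=\max\{\chi_{_{NL}}(G_k),\,k\}=\max\{k,k\}=k$, while the order is $n_{{_k}}+k=k\,(2^{k-1}-1)+k=k\,2^{k-1}$. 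Hence $G_k^{*}$ attains $n\le k\,2^{k-1}$ with equality, so item (1) is tight as well.

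No serious obstacle is expected; the only point requiring care is confirming that appending isolated vertices does not push the NLC-number above $k$, and this is settled at once by Remark~\ref{z1}, provided we add exactly $k$ of them so that $|W|$ does not exceed $\chi_{_{NL}}(G_k)$. If one prefers to avoid Remark~\ref{z1}, the same conclusion follows by verifying directly that assigning the $i$-th isolated vertex to the color class $W_i$ of the canonical coloring $\{W_1,\dots,W_k\}$ of $G_k$ gives an NL-coloring of $G_k^{*}$: each added vertex $z_i$ has tuple with a single $0$ in position $i$ and $2$ in every other position, a pattern shared by no vertex of $G_k$ (which all carry at least one $1$) and pairwise distinct among the $z_i$. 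The remaining steps are purely arithmetic.
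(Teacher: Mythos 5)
Your proposal is correct and follows essentially the same route as the paper: the paper's proof likewise takes $G_k$ itself as the witness for bound (3) and $G_k$ together with $k$ added isolated vertices as the witness for bound (1). Your two extra verifications --- that the proofs of items (1) and (3) of Theorem~\ref{gb} never use the hypothesis $\Delta\le k-1$ (note $\Delta_{{_k}}=(k-1)2^{k-2}>k-1$, so this point genuinely matters), and that Remark~\ref{z1} keeps $\chi_{_{NL}}(G_k^{*})=k$ after appending the isolated vertices --- are details the paper leaves implicit, and spelling them out is a welcome tightening rather than a different argument.
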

\begin{proof}
For every $k\ge 3$, the graph $G_k$ attains the bound given in Theorem \ref{gb} (3). If we add $k$ isolated vertices to $G_k$, then we have a graph attaining the bound given in Theorem \ref{gb} (1).
\end{proof}

\begin{prop}\label{subgrafo}
Let $k\ge 3$ and $\mu_{{_k}}=k\, (k-1) \, 2^{k-3}$.
If $H$ is a graph with no isolated vertices such that $\chi _{_{NL}}(H)=k$,
then

\begin{enumerate}[\rm (1)]

\item $H$ is isomorphic to a subgraph of $G_k$.

\item If $H$ has order $n_k$, then $\mu_k \le |E(H)| \le m_k$.

\end{enumerate}
\end{prop}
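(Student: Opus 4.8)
The plan is to encode $H$ inside $G_k$ via the neighbor code. Fix a minimum NL-coloring $\Pi=\{S_1,\dots,S_k\}$ of $H$ and define $\varphi\colon V(H)\to\{0,1,2\}^k$ by $\varphi(x)=nr(x|\Pi)$. Each code $\varphi(x)$ has exactly one component equal to $0$ (the color of $x$), and, since $H$ has no isolated vertices, at least one component equal to $1$; hence $\varphi(x)\in V_k$ for every $x$. Because $\Pi$ is neighbor-locating, distinct vertices receive distinct codes, so $\varphi$ is injective. This fixes the vertex map, and for (1) it remains only to verify that $\varphi$ preserves adjacency.

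For adjacency, let $xy\in E(H)$ with $x\in S_i$ and $y\in S_j$; properness of the coloring forces $i\neq j$. Since $y\in S_j$ is a neighbor of $x$, the $j$-th coordinate of $\varphi(x)$ equals $1$, and symmetrically the $i$-th coordinate of $\varphi(y)$ equals $1$. These are precisely the conditions defining an edge of $G_k$ between a word of $W_i$ and a word of $W_j$, so $\varphi(x)\varphi(y)\in E_k$. Thus $\varphi$ realizes $H$ as a (not necessarily induced) subgraph of $G_k$, proving (1).

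For (2), suppose $|V(H)|=n_k=|V_k|$. Then the injection $\varphi$ is a bijection onto $V_k$ that maps each $S_i$ onto $W_i$, so $H$ is isomorphic to a spanning subgraph $H'$ of $G_k$. The upper bound is immediate, since $|E(H)|=|E(H')|\le|E(G_k)|=m_k$ by Proposition~\ref{Gk}. For the lower bound I would count \emph{demands}: for each vertex $x$ and each coordinate $j$ with $\varphi(x)_j=1$, the definition of the code forces $x$ to have at least one neighbor in $S_j$. Each edge $xy$ of $H$, with $x\in S_i$ and $y\in S_j$, meets exactly two demands, namely $(x,j)$ and $(y,i)$, while every demand is met by at least one edge. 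Double counting the incidences between edges and demands then yields $2|E(H)|\ge D$, where $D$ is the total number of demands, that is, the total number of $1$'s occurring in the words of $V_k$.

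It then remains to evaluate $D$. As $\varphi$ is a bijection onto $V_k$, $D$ is the number of $1$'s among all length-$k$ words with exactly one $0$; fixing the position of the $0$ and counting the remaining $k-1$ coordinates over $\{1,2\}$ by symmetry gives $D=k(k-1)2^{k-2}$, so $|E(H)|\ge D/2=\mu_k$. I expect the main obstacle to be the lower bound: one must check that each edge discharges exactly two demands (ruling out any mismatch in the double count) and that every demand is genuinely forced by the code, which is exactly where the hypothesis that $H$ has no isolated vertices, together with the structure of $nr(\cdot|\Pi)$, comes into play.
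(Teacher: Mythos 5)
Your proposal is correct and follows the paper's proof essentially step for step: the same code map $x\mapsto nr(x|\Pi)$ (injective by the NL-property, landing in $V_k$ because no vertex is isolated, and edge-preserving because an edge between $S_i$ and $S_j$ puts a $1$ in the $j$-th coordinate of $x$ and the $i$-th of $y$) yields the embedding into $G_k$, and your lower bound in (2) rests on the same observation that each $1$-entry of a code forces an edge. The only cosmetic difference is that you reach $\mu_k$ by a global double count of all $D=k(k-1)2^{k-2}$ ones via $2|E(H)|\ge D$, whereas the paper counts, for each pair $i\neq j$, the $2^{k-2}$ vertices of $S_i$ demanding an edge into $S_j$ and sums $2^{k-2}$ over the $\binom{k}{2}$ unordered pairs --- two equivalent tallies of the same quantity.
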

\begin{proof}
Let $\Pi=\{ S_1, \dots , S_k\}$ be an NL-partition of $H$.
Recall that if $x\in V(H)$, then the tuple $nr(x|\Pi)=(x_1,\dots ,x_k)$ satisfies $x_i=0$ if  $x\in S_i$, and $x_j\in \{1,2\}$, if $j\not=i$.
Moreover,  $x_j=1$ for some $j\not= i$, since $H$ has no isolated vertices.

To prove item (1),
we identify $x\in V(H)$ with vertex $x_1\dots x_k\in V(G_k)$, whenever $nr(x|\Pi)=(x_1,\dots ,x_k)$.
If $xy\in E(H)$, then $x\in S_i$ and $y\in S_j$, for some $i,j\in \{1, \dots ,k \}$ with $i\not=j$.
Thus, if $nr(x|\Pi)=(x_1,\dots ,x_k)$ and $nr(y|\Pi)=(y_1,\dots ,y_k)$, then we have $x_i=y_j=0$ and $x_j=y_i=1$, that is, $x_1\dots x_k$ and $y_1\dots y_k$ are adjacent in $G_k$. 
Hence, $H$ is isomorphic to a subgraph of $G_k$.

Since $H$ is isomorphic to a subgraph of $G_k$, we have $|E(H)|\le E(G_k)\le m_k$. 
Hence, the upper bound of item (2) holds. 
To prove the lower bound,
notice that if $|V(H)|=|V(G)|=n_k$, then $S_i$ has $2^{k-1}-1$ vertices for every $i\in \{ 1,\dots ,k\}$ and $2^{k-2}$ of them are adjacent to a vertex of $S_j$, if $j\not= i$. 
Hence, fixed $i,j\in \{1,\dots ,k\}$ with $i\not= j$, the number of edges with an endpoint in $S_i$ and the other in $S_j$ is at least $2^{k-2}$. 
Therefore, $|E(H)|\ge \binom{k}{2} 2^{k-2}=k(k-1) 2^{k-3}=\mu_k$.
\end{proof}

\begin{remark}{\rm
	As a consequence of Proposition~\ref{subgrafo}, if $H$ is a graph of order at most $n_{{_k}}$ without isolated vertices not isomorphic to any subgraph of $G_k$, then  $\chi _{_{NL}}(H)\geq k+1$.
	However, the converse is not true: there are subgraphs of $G_k$ with NLC-number greater than $k$.
	For example, the cycle of order 4, $C_4$,  is a subgraph of $G_3$, but $\chi _{_{NL}}(C_4)=4$.}
\end{remark}

Next, we relate the NLC-number $\chi _{_{NL}}(G)$ to the independence number $\alpha(G)$ of a twin-free graph $G$ .

\begin{prop}
	If $G$ is a twin-free graph of order $n$, then $\chi _{_{NL}} (G) \le n-\alpha(G)+1$. 
	Moreover, this bound is tight.
\end{prop}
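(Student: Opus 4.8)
The plan is to produce an explicit NL-coloring with $n-\alpha(G)+1$ colors and then to exhibit a twin-free graph attaining equality. Fix a maximum independent set $I$ of $G$, so $|I|=\alpha(G)$. I would assign color $1$ to all of $I$ and give each of the $n-\alpha(G)$ remaining vertices its own private color, using the colors $2,\dots,n-\alpha(G)+1$. Since $I$ is independent and the other color classes are singletons, this is a proper coloring, so it only remains to check the neighbor-locating condition, which is non-trivial only inside the class $S_1=I$.

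The key observation is that every neighbor of a vertex of $I$ lies outside $I$ (as $I$ is independent) and hence carries a distinct private color; consequently the map sending a vertex of $V(G)\setminus I$ to its color is a bijection onto $\{2,\dots,n-\alpha(G)+1\}$. Therefore, for $u\in I$, the set of colors appearing on $N(u)$ determines $N(u)$ itself, and two vertices $u,v\in I$ receive the same neighborhood-color set if and only if $N(u)=N(v)$. But $u$ and $v$ are non-adjacent (both lie in $I$), so $N(u)=N(v)$ would make them false twins; since $G$ is twin-free this cannot happen, and the coloring is neighbor-locating. The only point needing care is a possible isolated vertex: a twin-free graph has at most one, it can be placed in $I$, and its empty neighborhood-color set is then automatically unique, so the argument is unaffected. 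This gives $\chi_{_{NL}}(G)\le n-\alpha(G)+1$.

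For tightness I would verify the bound on a concrete small graph, the path $P_4$, which is twin-free with $n=4$ and $\alpha(P_4)=2$, so the bound reads $\chi_{_{NL}}(P_4)\le 3$. By Proposition~\ref{z0} we have $\chi_{_{NL}}(P_4)\ge\chi(P_4)=2$, so it suffices to rule out every $2$-coloring. There is essentially one proper $2$-coloring of $P_4$, namely the two bipartition classes, and a direct check shows that the two vertices sharing a color have the same set of neighbor-colors; hence no $2$-coloring is neighbor-locating, and $\chi_{_{NL}}(P_4)=3=n-\alpha(P_4)+1$.

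The construction itself is routine; the conceptual content is simply recognizing that twin-freeness is exactly the hypothesis that makes the single non-singleton class neighbor-locating. The part requiring the most care is tightness, since the bound is delicate and natural candidate families tend to beat it: one checks, for instance, that $\chi_{_{NL}}(C_5)=3<4$, and that the graph obtained by attaching one pendant vertex to each vertex of $K_m$ admits an NL-coloring with only $m$ colors when $m\ge 3$, although the bound there predicts $m+1$. I would therefore emphasize that the extremal examples are sparse and that a small graph such as $P_4$ is the cleanest witness of tightness.
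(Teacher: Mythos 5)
Your proof of the upper bound is correct and is exactly the paper's construction: one color class equal to a maximum independent set $\Omega$, singletons elsewhere, with twin-freeness ruling out two vertices of $\Omega$ having equal (hence equally colored, since the outside colors are private) neighborhoods; your extra care about the at most one isolated vertex is sound (indeed any maximum independent set automatically contains every isolated vertex). Where you genuinely diverge is tightness. The paper exhibits an infinite family: the graph $H$ obtained from $K_r$ by attaching $r-1$ leaves to $r-1$ distinct vertices, which is twin-free of order $2r-1$ with $\alpha(H)=r$ and $\chi_{_{NL}}(H)=r=n-\alpha(H)+1$ (the lower bound coming for free from $\chi_{_{NL}}(H)\ge\chi(H)=r$). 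You instead verify the single witness $P_4$: your argument is complete, since a connected bipartite graph has a unique proper $2$-coloring and you check it fails the NL condition, giving $\chi_{_{NL}}(P_4)=3=4-\alpha(P_4)+1$. Strictly speaking one witness suffices for ``the bound is tight,'' but the paper's family buys more: equality at arbitrarily large $n$ and $\alpha$, whereas your example only certifies tightness at one small order. Your side remarks are correct and actually illuminating --- $\chi_{_{NL}}(C_5)=3<4$, and the corona of $K_m$ (a leaf on \emph{every} clique vertex) beats the bound with $m$ colors for $m\ge3$ --- and they explain why the paper's extremal family pointedly leaves one vertex of $K_r$ leafless.
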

\begin{proof}
	Let $\Omega$ be a maximum independent set of $G$, that is, an independent set such that $|\Omega|=\alpha(G)$.
	Consider the partition $\Pi= \{ \Omega \} \cup\{ \{z\} : z\in V(G)\setminus \Omega \}$.
	Notice that, since $G$ is a twin-free graph,  the partition $\Pi$ is an NL-coloring of $G$.
	As $| \Pi|=n-\alpha(G) +1$,  $\chi _{_{NL}} (G) \le n-\alpha(G)+1$.

To prove the tightness of the bound, let $H$ be the graph obtained from the complete graph $K_r$ by attaching $r-1$ leaves to respectively $r-1$ different vertices of $K_r$. Then, $H$ is a connected twin-free graph of order  $2r-1$, such that $\alpha (H)=r$. Besides, $ \x (H)\ge \chi (H) = r$ and
it is easy to check that $\x (H)=r$. Indeed, any partition with all parts but one of size two, such that each part of size two contains a leaf $u$ together with a vertex of $K_r$ non-adjacent to $u$, is an r-NL-coloring of $H$.
Hence,
$\chi _{_{NL}} (H) = r = (2r-1)-r+1=|V(H)|-\alpha (H)+1$.
\end{proof}

\newpage
\section{Extremal graphs}\label{sec.ext}

In this section, we focus our attention on graphs with NLC-number close to the order.
In \cite{cherheslzh02,cherheslzh03}, all connected graphs of order $n$ and $\chi _{_{L}}(G)=n$ and $\chi _{_{L}}(G)=n-1$ were characterized. 
Now, we approach the same problems for $\chi _{_{NL}}(G)$. 
In fact, we show that the graphs achieving these extreme values are the same for both parameters.

\begin{theorem}  [\cite{cherheslzh02}]\label{xmln}
	 If $G$ is a  connected graph of order $n\ge3$, then
	 $\chi _{_{L}}(G)=n$ if and only if $G$ is a complete multipartite graph.
\end{theorem}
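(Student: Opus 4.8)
The plan is to prove the two implications separately, using throughout that the singleton partition is always an ML-coloring, so $\chi_{_{L}}(G)\le n$ for every graph of order $n$; thus $\chi_{_{L}}(G)=n$ is equivalent to saying that \emph{no} ML-coloring uses fewer than $n$ colors, i.e.\ that no two vertices can ever be merged into a common color.

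For the forward direction, suppose $G$ is complete multipartite with parts $V_1,\dots,V_r$ (so $r\ge 2$ by connectivity, and two vertices are adjacent precisely when they lie in different parts). First I would observe that every independent set of $G$ is contained in a single part, hence so is every color class of any coloring $\Pi$. The key step is then a distance computation: if $u,v$ lie in the same part $V_i$, I claim $r(u|\Pi)=r(v|\Pi)$ for every such $\Pi$. Indeed, for a class $S_j\subseteq V_\ell$ with $\ell\ne i$ both $u$ and $v$ are adjacent to all of $S_j$, so $d(u,S_j)=d(v,S_j)=1$; for a class $S_j\subseteq V_i$ not containing $u,v$ both are non-adjacent to $S_j$ but reach it through a common neighbor in another part, so $d(u,S_j)=d(v,S_j)=2$; and the class actually containing $u,v$ gives $0$ for both. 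Consequently no ML-coloring can place two vertices of the same part in a common class; since classes already lie inside parts, every class is a singleton and $\chi_{_{L}}(G)=n$.

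For the converse I would argue by contraposition: assuming $G$ is connected but \emph{not} complete multipartite, I construct an ML-coloring with $n-1$ colors. The structural fact I would invoke is that a graph fails to be complete multipartite exactly when non-adjacency is not transitive, i.e.\ there exist three distinct vertices with $u\not\sim v$, $v\not\sim w$ and $u\sim w$. I then take the partition $\Pi$ whose only non-singleton class is $\{u,v\}$ (legitimate, as $u\not\sim v$), leaving everything else a singleton. Any two distinct singletons $\{x\},\{y\}$ are separated by the coordinate of $\{x\}$ (value $0$ for $x$, positive for $y$), and a singleton is separated from $u$ or $v$ by the $\{u,v\}$-coordinate (value $0$ only for $u,v$); the single delicate pair is $u$ versus $v$, which share that coordinate. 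Here I would use $w$: since $d(u,w)=1$ while $d(v,w)\ge 2$, the singleton class $\{w\}$ separates them. Hence $\Pi$ is an ML-coloring with $n-1$ classes and $\chi_{_{L}}(G)\le n-1$.

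The only genuinely substantive point is the converse, and within it the structural characterization of complete multipartite graphs as exactly those with transitive non-adjacency (equivalently, those whose complement is a disjoint union of cliques, i.e.\ with no induced $P_3$ in $\overline{G}$); once the configuration $u,v,w$ is located, the verification that merging $u,v$ preserves the metric-locating property is routine. I expect the main obstacle, if any, to be purely bookkeeping: confirming that merging $u$ and $v$ does not accidentally collide the distance vectors of two \emph{other} vertices, which the singleton-coordinate argument rules out cleanly.
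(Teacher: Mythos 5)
Your proposal is correct, but note first that the paper itself offers no proof of this statement: it is quoted as Theorem~\ref{xmln} directly from \cite{cherheslzh02}, so the only internal material to compare against is the analogous NL-result, Theorem~\ref{xnln}. There your converse construction coincides exactly with the paper's: the paper also observes that a connected non-complete-multipartite graph contains non-adjacent vertices $u,v$ with $N(u)\neq N(v)$ (your ``non-adjacency is not transitive'' triple $u,v,w$ is the same fact, with $w$ witnessing $N(u)\neq N(v)$), forms the partition whose sole doubleton class is $\{u,v\}$, and separates $u$ from $v$ by the singleton class of the private neighbor. Since the paper's definition of an ML-coloring only requires separating vertices \emph{within} a common class, the pair $\{u,v\}$ is indeed the only one needing attention, so your bookkeeping worry dissolves, as you suspected. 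Your forward direction is the standard distance computation and is sound: every color class lies inside one part, and two same-part vertices sharing a class see coordinate $1$ toward classes in other parts, $2$ toward other classes in their own part (using $r\ge 2$ from connectivity), and $0$ for their own class. One small phrasing slip: your claim ``$r(u|\Pi)=r(v|\Pi)$ for every such $\Pi$'' whenever $u,v$ share a part is false as literally stated --- if $u,v$ lie in \emph{different} classes, the coordinate of the class containing $u$ is $0$ for $u$ and $2$ for $v$ --- but your subsequent use (same part \emph{and} same class forces equal vectors, hence all classes are singletons) is the correct and intended reading, and the conclusion $\chi_{_{L}}(G)=n$ follows since the all-singletons partition shows $\chi_{_{L}}(G)\le n$ always.
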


Notice that complete graphs $K_n$, complete bipartite graphs $K_{h,n-h}\cong \overline{K_h}\vee \overline{K_{n-h}}$, stars $S_{1,n-1}\cong \overline{K_1}\vee \overline{K_{n-1}}$ and complete split graphs $\overline{K_h}\vee K_{n-h}$ are  some examples of complete multipartite graphs.

\begin{theorem} \label{xnln}
	If $G$ is a  graph of order $n\ge3$,
	then $\chi _{_{NL}}(G)=n$ if and only if $G$ is either a complete multipartite graph or $G=\overline{K_n}$.
\end{theorem}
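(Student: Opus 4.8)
The plan is to prove both implications, relying on the fact that the partition of $V(G)$ into singletons is always an NL-coloring, so that $\chi _{_{NL}}(G)\le n$ for every graph $G$; thus $\chi _{_{NL}}(G)=n$ is equivalent to the impossibility of any NL-coloring with fewer colors.

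For the \emph{if} direction, I would treat the two cases separately. If $G=\overline{K_n}$, every vertex is isolated; since no two isolated vertices can share a color (two isolated vertices lie outside every $N[S_j]$ except their own class, so they have identical tuples $nr(\cdot|\Pi)$ and are not neighbor-located), each color class must be a singleton and $\chi _{_{NL}}(\overline{K_n})=n$. If $G$ is complete multipartite with at least two parts, then $G$ is connected with $\diam(G)\le 2$, so Proposition~\ref{d2d4}(1) gives $\chi _{_{NL}}(G)=\chi_{_{L}}(G)$, and Theorem~\ref{xmln} yields $\chi_{_{L}}(G)=n$; hence $\chi _{_{NL}}(G)=n$.

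For the \emph{only if} direction I would argue by contraposition, via a single elementary reduction. Given two \emph{non-adjacent} vertices $u,v$, consider the partition $\Pi=\{\{u,v\}\}\cup\{\{w\}:w\in V(G)\setminus\{u,v\}\}$ with $n-1$ colors. Since $u\not\sim v$ the class $\{u,v\}$ is independent, and since every other vertex forms its own color class and $v\notin N(u)$, $u\notin N(v)$, the set of colors appearing in $N(u)$ coincides with the set of colors appearing in $N(v)$ \emph{if and only if} $N(u)=N(v)$. Therefore $\Pi$ is an NL-coloring, giving $\chi _{_{NL}}(G)\le n-1$, precisely when $N(u)\ne N(v)$. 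Consequently, if $\chi _{_{NL}}(G)=n$, then every pair of non-adjacent vertices must be false twins, that is, $u\not\sim v$ implies $N(u)=N(v)$.

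It then remains to show that this last condition forces $G$ to be complete multipartite or $\overline{K_n}$, which I regard as the crux of the argument. The key observation is that the condition makes non-adjacency transitive: if $u\not\sim v$ and $v\not\sim w$ with $u,v,w$ distinct, then $N(u)=N(v)=N(w)$, and were $u\sim w$ we would get $w\in N(u)=N(v)$, contradicting $v\not\sim w$; hence $u\not\sim w$. Thus the relation ``$x=y$ or $x\not\sim y$'' is an equivalence relation whose classes are independent sets with all edges present between distinct classes, so $G$ is complete multipartite; if there is a single class, then $G=\overline{K_n}$. Combining the two directions completes the proof. The main obstacle is exactly this structural step: translating the purely local twin condition into the global complete-multipartite structure, together with the bookkeeping needed to isolate $\overline{K_n}$ as the only disconnected case and to legitimately invoke the connected-graph result Theorem~\ref{xmln}.
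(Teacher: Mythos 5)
Your proof is correct and takes essentially the same route as the paper's: both directions hinge on the same devices, namely Theorem~\ref{xmln} together with $\chi_{_{L}}(G)\le\chi_{_{NL}}(G)\le n$ for the complete multipartite case, and the $(n-1)$-coloring consisting of one doubleton class of non-adjacent vertices plus singletons for the converse. The only substantive difference is organizational, and in your favor: the paper splits the converse into connected and disconnected cases (using a pair $\{x,t\}$ across components in the latter) and silently assumes that a connected graph that is not complete multipartite contains a non-adjacent pair with distinct neighborhoods, whereas your single reduction handles both cases at once and your transitivity argument makes that structural step explicit.
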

\begin{proof}
	Clearly, $\chi _{_{NL}}(\overline{K_n})=n$. 
	By other hand, if $G$ is a complete multipartite graph, then according to Theorem \ref{xmln}, $\chi _{_{L}}(G)=n$.
	Thus, by Proposition \ref{z0}, $\chi _{_{NL}}(G)=n$.
	
	Conversely, we distinguish two cases depending of  whether or not the graph $G$ is connected.

If $G$ is a connected graph of order $n\ge3$  which is not a complete multipartite graph, then there exists a pair of non-adjacent vertices $u,v\in V(G)$ such that $N(u)\neq N(v)$.
	Let $\Pi=\{S_1,S_2,\ldots,S_{n-1}\}$ be the coloring of $G$ such that $S_1=\{u,v\}$ and, for every $i\neq 1$, $|S_i|=1$.
	Certainly, $\Pi$ is an NL-coloring, since $N(u)\neq N(v)$.
	 Without loss of generality, we can consider $z \in V(G)$ such that $uz \in E(G)$ and $vz \notin E(G)$, so $d(u, \{z\})\neq d(v, \{z\})$ and $nr(u|\Pi)\neq nr(v|\Pi)$, and thus $\chi _{_{NL}}(G)\le n-1$.

If $G$ is a non-connected graph of order $n\ge3$  other than $\overline{K_n}$, then there exists a pair of adjacent vertices $x,y\in V(G)$ and there exist a vertex $t$ in a  connected component different from that of $x$ and $y$.
Let $\Pi=\{S_1,S_2,\ldots,S_{n-1}\}$ be the coloring of $G$ such that $S_1=\{x,t\}$ and, for every $i\neq 1$, $|S_i|=1$.
Clearly,  $\Pi$ is an NL-coloring, since $nr(x | \Pi)\neq nr(t | \Pi)$, and thus $\chi _{_{NL}}(G)\le n-1$.
\end{proof}

We next study the graphs of order $n$ and  NLC-number $n-1$. 
For this, we introduce first some families of graphs that will play an important role.

\begin{itemize}

	\item Let $\cal H$ denote the set of all connected graphs $G$ of order $n\ge3$ such that, for some vertex $v \in G$, $G-v$ is a complete multipartite graph.

	\item For $G\in  \cal H$,  call  $V_1$, $V_2$, $\ldots$, $V_k$, $k \geq 2$, to  the partite sets of $G-v$; and let $n_i=|V_i|$ and $a_i=|N(v)\cap V_i|$  for  $1\le i \le k$.
	
	\item Let $\cal F$ denote the set of all graphs $G\in {\cal H}$ satisfying  al least one of the following two  properties:

	\begin{enumerate}
		
		\item[(1)]  $a_i\in \{0,n_i\}$ for every   $i \in \{1,\ldots,k\}$, and  $| \{ i \in \{1, \ldots ,k\} \, | \, a_i=0\}|\geq 2$.

		\item[(2)] There is exactly one integer $i \in \{1,\ldots,k\}$ such that $a_i \not\in \{0,n_i\}$, and $a_i=n_i-1$ for this integer $i$.

	\end{enumerate}
	
	\item Let $\cal G$ denote the set of all graphs of order $n$ that are the join of $2K_2$ and a complete multipartite graph $G^*$ of order $n-4 \ge 1$, that is, $G= G^*\vee 2K_2$, $V(G)=V_1 \cup V_2$, $V_1=\{v_1, \dots, v_4\}$, $V_2=\{v_5, \dots, v_n\}$, $G[V_1]=2K_2$ and $G[V_2]=G^*$
and all the edges that connect vertices of $V_1$ with  vertices of $V_2$ are in $E(G)$ (see next section for more properties  of join graphs).

\end{itemize}

\begin{theorem}  [\cite{cherheslzh03}]\label{xmln-1}
	If $G$ is a connected graph of order $n\ge4$, then $\chi _{_{L}}(G)=n-1$ if and only if $G \in {\cal F} \cup {\cal G}$.
\end{theorem}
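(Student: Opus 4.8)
The plan is to reduce the biconditional to a statement about colorings that use $n-2$ colors. Since the partition of $V(G)$ into singletons is vacuously metric-locating, $\chi_{_{L}}(G)\le n$ always, and Theorem~\ref{xmln} gives that $\chi_{_{L}}(G)\le n-1$ if and only if $G$ is not complete multipartite. Next I would observe that an ML-coloring with exactly $n-1$ colors is a partition consisting of one independent class $\{u,v\}$ together with $n-2$ singletons, and that such a partition is metric-locating precisely when some vertex $w$ has $d(u,w)\ne d(v,w)$; because $u\not\sim v$, this is equivalent to $N(u)\ne N(v)$. I will call a non-adjacent pair with distinct neighborhoods a \emph{good pair}, so that the existence of a good pair is equivalent to $\chi_{_{L}}(G)\le n-1$, which by Theorem~\ref{xmln} is equivalent to $G$ not being complete multipartite. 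Combining these, $\chi_{_{L}}(G)=n-1$ holds if and only if $G$ is not complete multipartite \emph{and} $G$ admits no ML-coloring with $n-2$ colors; the whole proof then amounts to deciding when this second condition holds.

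The second step is to classify the possible $(n-2)$-colorings. Using $n-2$ colors on $n$ vertices forces one of two shapes: (Type~I) a single independent class $\{x,y,z\}$ of size three with all remaining classes singletons, or (Type~II) two independent classes $\{u_1,v_1\}$ and $\{u_2,v_2\}$ of size two with all remaining classes singletons. For Type~I, since distances to singleton classes are ordinary distances and the shared class contributes the same zero coordinate, the metric-locating condition reduces to requiring $x,y,z$ to be pairwise non-adjacent with pairwise distinct neighborhoods (any false-twin pair among them is fatal). For Type~II one must additionally track the coordinates $d(u_1,\{u_2,v_2\})$ and $d(u_2,\{u_1,v_1\})$, so I would compute these minima explicitly and reduce ``metric-locating'' to conditions on the neighborhoods $N(u_i),N(v_i)$ and on common neighbors between the two classes. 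The outcome of this step is a translation of ``$\chi_{_{L}}(G)\le n-2$'' into purely neighborhood-theoretic statements.

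With this translation in hand I would prove both directions. For sufficiency, given $G\in\mathcal{F}\cup\mathcal{G}$ I would first check that $G$ is not complete multipartite---for $\mathcal{F}$, two fully-missed parts (case~(1)) or the single almost-missed part (case~(2)) destroy the transitivity of non-adjacency, and for $\mathcal{G}$ the induced $2K_2$ does the same---so that $\chi_{_{L}}(G)\le n-1$; then I would rule out every Type~I and Type~II coloring. The key leverage is that the complete-multipartite part $G-v$ is extremely rigid---within each part the vertices are false twins unless split by adjacency to $v$---so the distinguishing information available to a coloring is severely limited, and the precise attachment pattern encoded by the $a_i$ in cases~(1) and~(2) is exactly what permits a single merger while obstructing any pair of simultaneous mergers or any triple merger; the analogous computation handles $\mathcal{G}$. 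For necessity I would assume $\chi_{_{L}}(G)=n-1$, hence $G$ is not complete multipartite and admits no $(n-2)$-coloring. Since $G$ is not complete multipartite, it contains an edge $bc$ together with a vertex $a$ adjacent to neither $b$ nor $c$, which yields the good pair $\{a,b\}$; I would then show that the absence of any second simultaneous merger (Type~II) and of any triple merger (Type~I) forces all but a bounded number of vertices to induce a complete multipartite graph, landing $G$ in $\mathcal{H}$ and, after reading off the constraints on the $a_i$, in $\mathcal{F}$, or else forcing the $2K_2$-join structure of $\mathcal{G}$.

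The main obstacle will be the Type~II analysis together with the necessity direction. Ruling out all two-merger ML-colorings hinges on the distances to the two size-two classes, which are governed by common neighbors rather than by neighborhoods alone, so the reduction to clean conditions is delicate. Turning ``no $(n-2)$-coloring exists'' into the precise dichotomy defining $\mathcal{F}$ (namely $a_i\in\{0,n_i\}$ for all $i$ with at least two zeros, versus exactly one exceptional part with $a_i=n_i-1$) and into the $2K_2$-join defining $\mathcal{G}$ requires showing that these are the only configurations admitting exactly one good merger while blocking every pair of simultaneous mergers. I would organize this final case analysis by the number of parts $V_i$ with $a_i\notin\{0,n_i\}$, treating the values $0$, $1$, and $\ge 2$ separately, and would use throughout that all graphs under consideration have diameter at most $3$.
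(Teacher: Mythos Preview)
The paper does not prove this theorem; it is quoted from \cite{cherheslzh03} and invoked as a known result in the proof of Theorem~\ref{xnln-1}. There is thus no in-paper argument to compare against.

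As an outline for an independent proof, your reduction is correct: $\chi_{_{L}}(G)=n-1$ holds precisely when $G$ is not complete multipartite yet admits no $(n-2)$-ML-coloring, and such colorings have exactly your Types~I and~II; the Type~I criterion (three pairwise non-adjacent, pairwise non-false-twin vertices) is also right. Where the proposal is thin is the necessity direction. You assert that the absence of any Type~I or Type~II ML-coloring ``forces all but a bounded number of vertices to induce a complete multipartite graph, landing $G$ in $\mathcal{H}$,'' but this is the substantive content of the theorem and is only stated, not argued; in the original paper (and as mirrored by Lemmas~\ref{2k2} and~\ref{lemxmln-1} here) the case split is governed by whether $2K_2\prec G$, and your outline does not isolate this dichotomy. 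Your final case analysis on $|\{i:a_i\notin\{0,n_i\}\}|$ only becomes meaningful once $G\in\mathcal{H}$ is already established, so it cannot itself drive the necessity argument. The diameter-$\le 3$ claim you invoke at the end is true and easy (a geodesic of length $\ge 4$ yields an $(n-2)$-ML-coloring by pairing endpoints with interior vertices), but it should be proved rather than assumed.
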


\begin{lemma} \label{2k2}
	If $G$ is a  graph of order $n\ge5$,  $\chi _{_{NL}}(G)=n-1$ and $2K_2 \prec G$, then  $G \in {\cal G}$.
\end{lemma}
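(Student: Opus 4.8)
The plan is to read the hypothesis $\chi_{_{NL}}(G)=n-1$ as a minimality statement: since $\chi_{_{NL}}(G)$ is the least number of colors in an NL-coloring, there is no NL-coloring of $G$ with $n-2$ colors. A partition of the $n$ vertices into $n-2$ nonempty independent sets consists either of one independent triple together with $n-3$ singletons, or of two disjoint non-adjacent pairs together with $n-4$ singletons. Using the description of NL-colorings through the tuples $nr(x|\Pi)$, such a partition is an NL-coloring if and only if the one or two non-trivial classes are internally neighbor-located, because a singleton is automatically located from every other vertex (their unique $0$-coordinates sit in different positions). So the whole task reduces to: assuming $G\notin{\cal G}$, exhibit two disjoint non-adjacent located pairs (or one located independent triple), contradicting minimality.

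Fix an induced $2K_2$ on $V_1=\{v_1,v_2,v_3,v_4\}$ with edges $v_1v_2$ and $v_3v_4$, and put $V_2=V(G)\setminus V_1$, so $|V_2|=n-4\ge 1$. I want to establish the two structural facts defining ${\cal G}$: (i) every vertex of $V_1$ is adjacent to every vertex of $V_2$, and (ii) $G[V_2]$ is complete multipartite; together with $G[V_1]=2K_2$ these give $G=G[V_2]\vee 2K_2\in{\cal G}$. The engine of every reduction is the following gadget: the cross pair $\{v_1,v_3\}$ is independent and is neighbor-located by $v_2$ whenever $v_2$ is a singleton, since $v_2\sim v_1$ while $v_2\not\sim v_3$ forces the $v_2$-coordinates of $nr(v_1|\Pi)$ and $nr(v_3|\Pi)$ to be $1$ and $2$; symmetrically $v_4$ locates it, and the same holds for each cross pair $\{v_i,v_j\}$ with $i\in\{1,2\}$, $j\in\{3,4\}$, its two opposite $2K_2$-vertices acting as distinguishers. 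Thus a cross pair can always be merged "for free" as long as its two opposite vertices are kept as singletons.

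For (ii), recall that a graph is complete multipartite if and only if it contains no induced $K_2\cup K_1$. If $G[V_2]$ contained one, say $r_1\sim r_2$ and $r_3\in V_2$ with $r_3\not\sim r_1$ and $r_3\not\sim r_2$ (which needs $|V_2|\ge 3$, i.e.\ $n\ge 7$), then the two-pair partition with classes $\{v_1,v_3\}$, $\{r_1,r_3\}$ and all other vertices as singletons would be an NL-coloring with $n-2$ colors: the class $\{v_1,v_3\}$ is located by the singleton $v_2$ as above, and $\{r_1,r_3\}$ is located by the singleton $r_2$, since $r_2\sim r_1$ while $r_2\not\sim r_3$. This contradiction shows $G[V_2]$ is complete multipartite, so in particular any two vertices in the same part of $G[V_2]$ are non-adjacent with identical neighborhoods inside $V_2$.

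The delicate part is (i), the complete join, and I expect it to be the main obstacle: distinguishing a vertex $r\in V_2$ from a $2K_2$-vertex placed in its class is not automatic, the attachment of $r$ to $V_1\cup V_2$ is a priori arbitrary, and merging classes can annihilate the natural distinguishers. Assuming $r$ misses some vertex of $V_1$, I again aim for two located disjoint non-adjacent pairs, one being $r$ paired with the $2K_2$-vertex it misses and the other a cross pair, chosen so that a single still-singleton $2K_2$-vertex is adjacent to exactly one endpoint of each class and hence locates both simultaneously (for instance, when $r\not\sim v_1$ and $r\sim v_4$ one uses $\{v_1,r\}$, $\{v_2,v_3\}$ with $v_4$ as the common distinguisher). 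The argument is a finite case analysis on the adjacencies of $r$ to $\{v_1,v_2,v_3,v_4\}$, invoking the complete-multipartite structure of $G[V_2]$ from (ii) to control $r$ inside $V_2$; the problematic situations are when $r$ is a false twin of a $2K_2$-vertex or when every naive pairing collapses a distinguisher, which I plan to resolve by switching to the complementary cross-pairing or to a triple. Once (i) is secured, $G=G[V_2]\vee 2K_2$ with $G[V_2]$ complete multipartite of order $n-4\ge 1$, that is, $G\in{\cal G}$.
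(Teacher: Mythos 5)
Your reduction framework is exactly the paper's: from $\chi_{_{NL}}(G)=n-1$ there is no $(n-2)$-NL-coloring; an $(n-2)$-partition into independent sets has as non-trivial classes either one triple or two pairs; singleton classes are automatically located against everything; so it suffices to exhibit one or two internally located classes whenever $G\notin\mathcal{G}$. Your part (ii) is complete and correct, and is essentially the paper's second step in equivalent clothing: the paper picks non-adjacent $u,v\in V(G)\setminus S$ with $N(u)\neq N(v)$ and pairs them with the cross pair $\{a_1,b_1\}$, while you trigger on an induced $K_2+K_1$ in $G[V_2]$ and use the classes $\{v_1,v_3\}$ and $\{r_1,r_3\}$ with singleton distinguishers $v_2$ and $r_2$; both are valid, and you correctly observe that this step does not presuppose the complete join.

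The genuine gap is step (i), the complete join, which is the heart of the lemma and which you explicitly leave as a plan. You verify only one adjacency pattern ($r\not\sim v_1$, $r\sim v_4$); the patterns you yourself flag as problematic --- e.g.\ $r$ missing both $v_1$ and $v_4$ while adjacent to $v_2$ or $v_3$ --- are precisely where the content lies, and ``switching to the complementary cross-pairing or to a triple'' is an intention, not an argument. The paper disposes of this with a short explicit analysis: writing the $2K_2$ as $a_1a_2,b_1b_2$ and assuming $w\not\sim b_2$ (arrangeable by relabelling whenever $|N(w)\cap S|\le 3$), the default coloring $S_1=\{w,b_2\}$, $S_2=\{a_1,b_1\}$, $S_3=\{a_2\}$ is an NL-coloring --- the singleton $a_2$ always locates $\{a_1,b_1\}$, and it locates $\{w,b_2\}$ unless $w\not\sim a_2$, in which case the color of $S_2$ locates $\{w,b_2\}$ unless $w$ is adjacent to $a_1$ or $b_1$ --- and the two exceptional cases are fixed by the re-pairings $S_1=\{w,b_2\}$, $S_2=\{a_2,b_1\}$, $S_3=\{a_1\}$ (when $w\sim a_1$) and $S_1=\{w,a_2\}$, $S_2=\{a_1,b_2\}$, $S_3=\{b_1\}$ (when $w\sim b_1$), where in each case the new singleton locates both pairs simultaneously. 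Note also that, contrary to your plan, no appeal to the multipartite structure of $G[V_2]$ is needed here: all distinguishers can be chosen inside $S\cup\{w\}$, so (i) stands independently of (ii). Until you write out this finite case analysis (or an equivalent one), your argument establishes only half of the lemma.
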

\begin{proof}
	Let $S=\{a_1,a_2,b_1,b_2\}\subset V(G)$ be a set such that $a_1a_2,b_1b_2\in E(G)$ and $G[S]\cong 2K_2$.
	Let $w\in V(G) \setminus S$ and let $h=|N(w)\cap S|$.
	Suppose that $h\le 3$ and $wb_2\not\in E(G)$.
	Consider the $(n-2)$-partition $\Pi=\{S_1,S_2,S_3,\ldots,S_{n-2}\}$, where $S_1=\{w,b_2\}$, $S_2=\{a_1,b_1\}$ and $S_3=\{a_2\}$.
	Notice that $\Pi$ is an NL-coloring of $G$, except in two cases: that $wa_2 \notin E(G)$ and also $w$ is adjacent to $a_1$ or to $b_1$. In the case that $wa_2 \notin E(G)$ and $wa_1 \in E(G)$ we take $S_1=\{w,b_2\}$, $S_2=\{a_2,b_1\}$ and $S_3=\{a_1\}$. If $wa_2 \notin E(G)$ and $wb_1 \in E(G)$ we take $S_1=\{w,a_2\}$, $S_2=\{a_1,b_2\}$ and $S_3=\{b_1\}$.
	Thus, $\chi _{_{NL}}(G)\le n-2$, a contradiction.
	So, we have proved that each vertex of $V(G) \setminus S$ is adjacent to every vertex of $S$.
	
	Take $H=G[V(G) \setminus S]$.
	Suppose that $H$ is not a complete multipartite graph.
	Let $u,v\in V(G) \setminus S$ such that $uv\not\in E(G)$ and $N(u)\not=N(v)$.
	Consider the $(n-2)$-partition $\Pi=\{S_1,S_2,S_3,\ldots,S_{n-2}\}$, where $S_1=\{u,v\}$ and  $S_2=\{a_1,b_1\}$.
	Certainly, $\Pi=$ is  an NL-coloring of $G$, i.e.,  $\chi _{_{NL}}(G)\le n-2$, a contradiction.
	Hence, $H$ is a complete multipartite graph.
\end{proof}

\begin{lemma} [\cite{cherheslzh03}]\label{lemxmln-1}
	If  $G$ is a  connected graph of order $n\ge4$ with
	 $\chi _{_{L}}(G)=n-1$ and $2K_2 \not\prec G$, then  $G \in {\cal F}$.
\end{lemma}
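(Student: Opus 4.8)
The quickest route is to invoke the characterization already recorded as Theorem~\ref{xmln-1}: a connected graph $G$ of order $n\ge4$ satisfies $\chi_{_{L}}(G)=n-1$ if and only if $G\in\mathcal{F}\cup\mathcal{G}$. Since our $G$ is connected of order $n\ge4$ with $\chi_{_{L}}(G)=n-1$, that theorem places $G$ in $\mathcal{F}\cup\mathcal{G}$, and the entire content of the lemma reduces to ruling out $G\in\mathcal{G}$ by means of the hypothesis $2K_2\not\prec G$.

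For that, I would observe that every member of $\mathcal{G}$ contains an induced $2K_2$. Indeed, if $G\in\mathcal{G}$ then by definition $V(G)=V_1\cup V_2$ with $V_1=\{v_1,v_2,v_3,v_4\}$ and $G[V_1]=2K_2$; hence $V_1$ induces a copy of $2K_2$ and $2K_2\prec G$. The hypothesis $2K_2\not\prec G$ therefore forces $G\notin\mathcal{G}$, so $G\in\mathcal{F}$, which is the desired conclusion. Given Theorem~\ref{xmln-1}, this is the whole proof and there is no real obstacle: the only verification needed is the immediate remark that the join factor $2K_2$ survives as an induced subgraph.

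If one instead wanted a proof independent of the full characterization in Theorem~\ref{xmln-1}, I would argue directly, and here the work is genuine. First I would record two structural facts: $2K_2\not\prec G$ forces $\diam(G)\le3$ (a geodesic of length at least $4$ produces two disjoint edges joined by no edge, an induced $2K_2$), and $\chi_{_{L}}(G)=n-1<n$ rules out $G$ being complete multipartite by Theorem~\ref{xmln}, so the non-adjacency relation of $G$ is not transitive. The crux would be to prove that $G\in\mathcal{H}$, i.e.\ that deleting a single vertex $v$ leaves a complete multipartite graph. I would do this by contraposition: if no such $v$ existed, the failures of transitivity of non-adjacency would be spread over at least two ``independent'' locations, and $2K_2$-freeness would keep these from producing disjoint edges, which is exactly what lets one merge two disjoint non-adjacent pairs (or an independent triple) into colour classes and build a locating colouring with only $n-2$ colours, contradicting $\chi_{_{L}}(G)\ge n-1$. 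This reduction is the main obstacle. Once $G\in\mathcal{H}$ is secured, writing $G-v$ as a complete multipartite graph with parts $V_1,\dots,V_k$ and $a_i=|N(v)\cap V_i|$, the remaining step is a finite case analysis: any part with $0<a_i<n_i-1$, or two distinct parts with $a_i\notin\{0,n_i\}$, or, when all $a_i\in\{0,n_i\}$, having fewer than two indices with $a_i=0$, either makes $G$ complete multipartite or permits a further merge to an $(n-2)$-locating colouring; eliminating these leaves precisely the conditions (1) and (2) that define $\mathcal{F}$.
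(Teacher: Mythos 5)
Your main argument is correct, and it is worth saying up front that the paper itself offers no proof of this lemma at all: it is imported verbatim from \cite{cherheslzh03}, where it arises as an intermediate step towards the full characterization. Within the present paper, your route is the natural one and it is airtight: Theorem~\ref{xmln-1} (also quoted from \cite{cherheslzh03}) places $G$ in ${\cal F}\cup{\cal G}$, and every member of ${\cal G}$ has, by the very definition of that family, $G[V_1]\cong 2K_2$ with $V_1=\{v_1,\dots,v_4\}$, so $2K_2\prec G$; the hypothesis $2K_2\not\prec G$ therefore excludes ${\cal G}$ and yields $G\in{\cal F}$. The one caveat you should be aware of is that this reverses the logical order of the original source: in \cite{cherheslzh03} the lemma is a stepping stone used to prove the characterization theorem, so your derivation is a reduction to an imported result rather than a proof from scratch. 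Inside this paper, where Theorem~\ref{xmln-1} is taken as known, that is perfectly legitimate --- indeed it explains why the authors could state the lemma without proof, and it matches how the lemma is actually deployed here (in the proof of Theorem~\ref{xnln-1}, after Lemma~\ref{diam3} supplies $\chi_{_{L}}(G)=n-1$).

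Your second, ``independent'' sketch is a different matter: the two structural observations you record are fine (a geodesic of length $4$ does induce a $2K_2$, and $\chi_{_{L}}(G)<n$ does rule out complete multipartite graphs via Theorem~\ref{xmln}), but the central step --- that the absence of a vertex $v$ with $G-v$ complete multipartite lets one merge classes into an $(n-2)$-ML-coloring --- is only gestured at, and the concluding case analysis over the parameters $a_i$ is asserted rather than carried out. As written this sketch would not stand alone; since you offer it only as an optional alternative, it does not affect the correctness of your main argument.
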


\begin{lemma}   \label{P3K1}
	Let $G$ be a  graph of order $n\ge5$.
	Let $S=\{u_1,u_2,u_3,v\}$ be a set of vertices of $V(G)$ such that $d(u_1,u_3)=2$, $u_2\in N(u_1)\cap N(u_3)$ and $N(u_1) \neq N(u_3)$.
	If $S$ induces a subgraph of $G$ isomorphic to  $P_3 + K_1$, then $\chi _{_{NL}}(G)\le n-2$.
\end{lemma}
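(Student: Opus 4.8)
The plan is to construct an explicit NL-coloring of $G$ with exactly $n-2$ colors, organized around the four vertices of $S$. First I would read off the adjacencies inside $S$ from the hypothesis $G[S]\cong P_3+K_1$. Since $u_2\in N(u_1)\cap N(u_3)$, the vertex $u_2$ has degree $2$ in $G[S]$, so it must be the center of the path and $v$ must be the isolated vertex; hence $u_1\sim u_2\sim u_3$, $u_1\not\sim u_3$ (consistent with $d(u_1,u_3)=2$), and $v$ is adjacent to none of $u_1,u_2,u_3$. The crucial consequence is that $u_1$ and $u_3$ have exactly the same neighbors inside $S$ --- namely $u_2$ and nothing else --- so the symmetric difference $N(u_1)\triangle N(u_3)$ is contained in $V(G)\setminus S$. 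As $N(u_1)\neq N(u_3)$, there is therefore a vertex $z\in V(G)\setminus S$ adjacent to exactly one of $u_1,u_3$.

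Then I would take the partition
$$\Pi=\{\{u_1,u_3\},\{u_2,v\}\}\cup\{\{x\}:x\in V(G)\setminus S\},$$
which has $2+(n-4)=n-2$ parts and is a coloring because the only non-singleton classes, $\{u_1,u_3\}$ and $\{u_2,v\}$, are independent by the adjacencies above. To confirm that $\Pi$ is an NL-coloring I only need to neighbor-locate the two pairs lying in a common color. The pair $\{u_1,u_3\}$ is separated by $z$: since $z\in V(G)\setminus S$, it forms its own singleton color, and that color appears in the neighbor-color-set of exactly the one of $u_1,u_3$ that $z$ is adjacent to, so $nr(u_1|\Pi)\neq nr(u_3|\Pi)$. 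The pair $\{u_2,v\}$ is separated by the color of the class $\{u_1,u_3\}$: the vertex $u_2$ is adjacent to $u_1$, so that color occurs among $u_2$'s neighbor-colors, while $v$ is adjacent to neither $u_1$ nor $u_3$, the only vertices of that color, so that color is absent from $v$'s neighbor-colors. All other classes are singletons and impose no further condition, so $\Pi$ is an NL-coloring and $\chi _{_{NL}}(G)\le n-2$.

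The one point I would verify with care, and the heart of the argument, is that the witness $z$ distinguishing $u_1$ from $u_3$ can be found outside $S$. This is exactly what the $P_3+K_1$ structure (together with $d(u_1,u_3)=2$) buys us: it forces $u_1$ and $u_3$ to have identical adjacencies within $S$, so the genuine difference $N(u_1)\neq N(u_3)$ must be realized in $V(G)\setminus S$, where each vertex forms a singleton color and hence separates the pair cleanly. This is also where the assumption $n\ge5$ enters, guaranteeing $V(G)\setminus S\neq\emptyset$ so that such a witness exists at all.
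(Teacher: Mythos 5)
Your proof is correct and is essentially the paper's own argument: the paper takes the same partition $\{\{u_1,u_3\},\{u_2,v\}\}$ together with singletons, and uses a witness $w\in N(u_1)\cup N(u_3)$ outside $N(u_1)\cap N(u_3)$ as its own color class to separate $u_1$ from $u_3$, while the class $\{u_1,u_3\}$ separates $u_2$ from $v$. Your only addition is to spell out explicitly why the witness must lie outside $S$ (because $u_1$ and $u_3$ have identical adjacencies within $S$), a point the paper leaves implicit.
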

\begin{proof}
	Let $w$ be a vertex of $N(u_1)\cup N(u_3)$ not belonging to $N(u_1)\cap N(u_3)$.
	Take the partition $\Pi=\{S_1, S_2,S_3,\ldots,S_{n-2}\}$ such that $S_1=\{u_1,u_3\}$, $S_2=\{u_2,v\}$ and $S_3=\{w\}$.
	Notice that $d(u_1,S_3)=1<d(u_3,S_3)$ and $d(u_2,S_1)=1<d(v,S_1)$.
	Hence, $\Pi$ is an NL-coloring of $G$.
\end{proof}

\begin{lemma}\label{diam3}
	Let $G$ be a   graph of order $n\ge5$, diameter $3$ such that $2K_2 \not\prec G$.
	If  $\chi _{_{NL}}(G)=n-1$, then $\chi _{_{L}}(G)=n-1$.
\end{lemma}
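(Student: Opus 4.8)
The plan is to establish $\chi_{_{L}}(G)\ge n-1$; since Proposition~\ref{z0} already gives $\chi_{_{L}}(G)\le\chi_{_{NL}}(G)=n-1$, this yields the claim. I would argue by contradiction, assuming $\chi_{_{L}}(G)\le n-2$, and fix an ML-coloring $\Pi=\{S_1,\dots,S_k\}$ of $G$ with $k\le n-2$. If $\Pi$ were itself an NL-coloring, then $\chi_{_{NL}}(G)\le n-2$, contradicting the hypothesis; so I may assume $\Pi$ is an ML-coloring that is \emph{not} an NL-coloring, and the task becomes to convert this failure into a genuine NL-coloring with at most $n-2$ colors.

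The first step is to locate the obstruction. Because $G$ is connected, each entry of $nr(x|\Pi)$ equals $\min\{2,d(x,S_i)\}$, so a pair of vertices that is metrically resolved but not neighbor-located must lie in a common class $S_i$ and differ only through a coordinate $j$ on which one distance is $2$ and the other is $3$; write $d(u,S_j)=3$ and $d(v,S_j)=2$ with $u,v\in S_i$. Choosing $w\in S_j$ with $d(v,w)=2$ together with a common neighbor $x$ of $v$ and $w$, the facts that $u$ is at distance $3$ from all of $S_j$ (so $u\not\sim w$ and $u\not\sim x$), that $u\not\sim v$, and that $v\not\sim w$ show that $\{u,v,x,w\}$ induces a $P_3+K_1$, with path $v$–$x$–$w$ and isolated vertex $u$.

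The decisive split is whether $N(v)=N(w)$. If $N(v)\ne N(w)$, the configuration meets the hypotheses of Lemma~\ref{P3K1} (taking $u_1=v$, $u_2=x$, $u_3=w$, and $u$ as the isolated vertex), yielding $\chi_{_{NL}}(G)\le n-2$ immediately. The case $N(v)=N(w)$, where $v$ and $w$ are false twins and Lemma~\ref{P3K1} is unavailable, is the step I expect to be the main obstacle. Here I would first note that $N(u)\cap N(v)=N(u)\cap N(w)=\emptyset$ forces $d(u,v)=3$, and then bypass the lemma by an explicit construction: picking any neighbor $b$ of $u$, I form $\Pi''=\{\{u,v\},\{w,b\}\}\cup\{\{z\}:z\notin\{u,v,w,b\}\}$, which has $n-2$ classes. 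One checks that both doubletons are independent ($u\not\sim v$ because $d(u,v)=3$, and $w\not\sim b$ because $b\in N(u)$ while $d(u,w)=3$), that within $\{u,v\}$ only $u$ is adjacent to the class $\{w,b\}$ (here $v\not\sim b$ follows from $N(u)\cap N(v)=\emptyset$), and that within $\{w,b\}$ only $b$ is adjacent to the class $\{u,v\}$; hence both doubletons are neighbor-located and $\Pi''$ is an NL-coloring with $n-2$ colors. In either branch we obtain $\chi_{_{NL}}(G)\le n-2$, contradicting $\chi_{_{NL}}(G)=n-1$, and therefore $\chi_{_{L}}(G)=n-1$.
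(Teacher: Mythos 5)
Your proof is correct, and although it opens exactly as the paper's does (contradiction via an ML-coloring with at most $n-2$ colors, extraction of a same-class pair $u,v$ whose distances to some class $S_j$ are $2$ and $3$, construction of an induced $P_3+K_1$, and a split on whether the two endpoints of the $P_3$ are false twins), it resolves the decisive twin case by a genuinely different device. The paper pins down the class structure of the $(n-2)$-ML-coloring explicitly ($S_1=\{u_1,v_1\}$, $S_2=\{u_2,v_2\}$ plus singletons), uses Lemma~\ref{P3K1} to force $N(u_1)=N(w_3)$ while identifying the intermediate vertices as $u_2$ and $v_2$, and then exhibits $\{u_2,w_3,v_1,v_2\}$ as an induced $2K_2$, so its final contradiction is with the hypothesis $2K_2\not\prec G$. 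You instead keep the failing pair and the witness class abstract, and in the case $N(v)=N(w)$ you build an explicit $(n-2)$-NL-coloring $\{\{u,v\},\{w,b\}\}$ together with singletons; your verifications all check out ($b\notin\{v,w\}$ and $w\not\sim b$ from $d(u,S_j)=3$, $v\not\sim b$ from $N(u)\cap N(v)=\emptyset$, and the one-sided adjacency of each doubleton to the other neighbor-locates both), so your contradiction is with $\chi_{_{NL}}(G)=n-1$ itself. Your route buys two things: it never invokes $2K_2\not\prec G$, so you have in fact proved the lemma without that hypothesis (which is consistent with the paper: by Lemma~\ref{2k2}, a graph of order $n\ge 5$ with $\chi_{_{NL}}(G)=n-1$ containing an induced $2K_2$ lies in $\mathcal{G}$, hence is a join and has diameter at most $2$), and it avoids the paper's implicit case analysis of how $n$ vertices distribute among $n-2$ color classes (the paper only treats the two-doubleton configuration). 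What the paper's route buys is structural information — the obstruction in the twin case is literally an induced $2K_2$ — which dovetails with Lemma~\ref{2k2} and the role of the family $\mathcal{G}$ in Theorem~\ref{xnln-1}, whereas your argument is more self-contained and slightly more general.
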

\begin{proof}
	Suppose on the contrary that there exist a graph $G$ such that $\chi _{_{NL}}(G)=n-1$ and $\chi _{_{L}}(G)\le n-2$.
	Let $\Pi=\{S_1,S_2,\ldots S_{n-2}\}$ be an ML-coloring of cardinality $n-2$. $\Pi$ can not be an NL-coloring.
	If $S_1=\{u_1,v_1\}$,  $S_2=\{u_2,v_2\}$,  $S_3=\{w_3\}$, $\ldots$, $S_{n-2}=\{w_{n-2}\}$, then  we can assume without loss of generality that, for every $j\in\{2,\ldots,n-2\}$,  $nr(u_1 | \Pi)= nr(v_1 | \Pi)$.
	This means that, for every $j\in\{2,\ldots,n-2\}$, either $d(u_1,S_j)=d(v_1,S_j)=1$ or
	$2 \le  d(u_1,S_j),d(v_1,S_j) \le 3$.
	Notice that if, for every $j\in\{3,\ldots,n-2\}$, we have $d(u_1,S_j)=d(v_1,S_j)=1$, then $1 \le  d(u_1,S_j),d(v_1,S_j) \le 2$.
	Hence, we can suppose without loss of generality  that $d(u_1,w_3)=2$ and $d(v_1,w_3)=3$.
	Let $z\in V(G)$ be such that $d(u_1,z)=d(z,w_3)=1$.
	If, for some $j\in\{4,\ldots,n-2\}$,  we have $z=w_j$, then $d(v_1,z)=1$, and thus $d(v_1,w_3)=2$, a contradiction.
	So, we can suppose without loss of generality  that $z=u_2$.
	Let $x\in V(G)$ be such that $d(v_1,x)=1$ and $d(x,w_3)=2$.
	Notice that $x=v_2$, as otherwise, if for some $j\in\{4,\ldots,n-2\}$, $x=w_j$, then $d(u_1,x)=1$, and according to Lemma \ref{P3K1}, $d(x,w_3)=1$ since $N(u_1)=N(w_3)$, a contradiction.
	Hence, the subgraph induced by $\{u_2,w_3,v_1,v_2\}$ is isomorphic to $2K_2$, which is again a contradiction.
\end{proof}


\vspace{.6cm}
\begin{theorem}  \label{xnln-1}
	Let $G$ be a   graph of order $n\ge5$.
	Then, $\chi _{_{NL}}(G)=n-1$ if and only if either $G \in {\cal F} \cup {\cal G}$ or $G \cong H + K_1$, where $H$ is an arbitrary complete multipartite graph.
\end{theorem}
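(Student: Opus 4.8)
The plan is to prove both implications by reducing everything to results already available: the locating-chromatic characterizations (Theorems~\ref{xmln} and~\ref{xmln-1} and Lemma~\ref{lemxmln-1}), the value-$n$ characterization (Theorem~\ref{xnln}), the chain $\chi_{_{L}}\le\chi_{_{NL}}$ (Proposition~\ref{z0}), and the diameter and $2K_2$ results (Proposition~\ref{d2d4}, Lemmas~\ref{2k2} and~\ref{diam3}). For \emph{sufficiency}, suppose first $G\in\mathcal{F}\cup\mathcal{G}$. Each such graph is connected of order $n\ge5$, so Theorem~\ref{xmln-1} gives $\chi_{_{L}}(G)=n-1$, and then Proposition~\ref{z0}(1) yields $\chi_{_{NL}}(G)\ge n-1$. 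For the reverse inequality, note that $G$ is not $\overline{K_n}$ (it is connected) and is not complete multipartite (otherwise Theorem~\ref{xmln} would force $\chi_{_{L}}(G)=n$), so Theorem~\ref{xnln} gives $\chi_{_{NL}}(G)\le n-1$; hence equality. Suppose instead $G\cong H+K_1$ with $H$ complete multipartite of order $n-1$. Then $H$ has no isolated vertex, so the added vertex is the unique isolated vertex of $G$; by the decomposition of Remark~\ref{z1}, $\chi_{_{NL}}(G)=\max\{\chi_{_{NL}}(H),1\}$, and $\chi_{_{NL}}(H)=n-1$ by Theorem~\ref{xnln}, so $\chi_{_{NL}}(G)=n-1$.

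For \emph{necessity} with $G$ connected, I would split on whether $2K_2\prec G$. If $2K_2\prec G$, then Lemma~\ref{2k2} directly gives $G\in\mathcal{G}$. If $2K_2\not\prec G$, I argue by diameter. Since $G$ is connected of order $n\ge5$ and $\chi_{_{NL}}(G)=n-1$, the diameter cannot be $1$ (then $G=K_n$ is complete multipartite with $\chi_{_{NL}}=n$) nor at least $4$ (then $\chi_{_{NL}}(G)\le n-2$ by Proposition~\ref{d2d4}(2)). If $\diam(G)=2$, Proposition~\ref{d2d4}(1) gives $\chi_{_{L}}(G)=\chi_{_{NL}}(G)=n-1$; if $\diam(G)=3$, Lemma~\ref{diam3} gives $\chi_{_{L}}(G)=n-1$. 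In either case $G$ is connected of order $n\ge4$ with $\chi_{_{L}}(G)=n-1$ and $2K_2\not\prec G$, so Lemma~\ref{lemxmln-1} yields $G\in\mathcal{F}$.

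For \emph{necessity} with $G$ disconnected, let $W$ be the set of isolated vertices and $G'=G-W$, so that Remark~\ref{z1} gives $\chi_{_{NL}}(G)=\max\{\chi_{_{NL}}(G'),|W|\}=n-1$. If $W=\emptyset$, then $G$ has at least two components, each containing an edge, whence $2K_2\prec G$; Lemma~\ref{2k2} would then place $G$ in the connected family $\mathcal{G}$, which is impossible, so this case does not occur. If $|W|\ge2$, then $G'$ has no isolated vertex, so it is either empty (forcing $G=\overline{K_n}$ and $\chi_{_{NL}}(G)=n$) or has order $n-|W|\ge2$, giving $|W|\le n-2$ and $\chi_{_{NL}}(G')\le n-|W|\le n-2$, so $\chi_{_{NL}}(G)\le n-2$; both possibilities contradict $\chi_{_{NL}}(G)=n-1$. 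Hence $|W|=1$, so $\chi_{_{NL}}(G')=n-1=|V(G')|$ with $G'$ having no isolated vertex; Theorem~\ref{xnln} then makes $G'$ complete multipartite, and $G\cong G'+K_1$ is of the desired form.

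I expect the real difficulty to be entirely contained in the lemmas invoked above — the $2K_2$ reduction (Lemma~\ref{2k2}) and, above all, the passage from $\chi_{_{NL}}$ back to $\chi_{_{L}}$ when $\diam(G)=3$ (Lemma~\ref{diam3}, which itself leans on Lemma~\ref{P3K1}). Granting those, the theorem is a self-contained case analysis: connected versus disconnected, a $2K_2$-split inside the connected case feeding the diameter dichotomy, and an isolated-vertex count inside the disconnected case feeding Theorem~\ref{xnln}.
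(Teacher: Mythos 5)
Your proof is correct. For sufficiency and for the connected half of necessity it follows the paper's own route almost verbatim: Theorem~\ref{xmln-1} plus Proposition~\ref{z0} and Theorem~\ref{xnln} give the value $n-1$ on $\mathcal{F}\cup\mathcal{G}$ (your explicit appeal to Theorem~\ref{xnln} is in fact the reference the paper intends where it cites Theorem~\ref{xmln}), and the connected case is the same dichotomy via Proposition~\ref{d2d4}, Lemma~\ref{2k2}, Lemma~\ref{diam3} and Lemma~\ref{lemxmln-1}, merely reorganized ($2K_2$ split first, then diameter, rather than the reverse). Where you genuinely diverge is the disconnected case, and your version is arguably tighter than the paper's. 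The paper dispatches it with three explicit constructions: when all components are nontrivial it builds an $(n-2)$-coloring from a path $u_1v_1w_1$ inside a component of order at least $3$; when there are at least two isolated vertices it builds another $(n-2)$-coloring; and with exactly one isolated vertex it reduces to Theorem~\ref{xnln}, as you do. You instead kill the no-isolated-vertex case abstractly: two nontrivial components yield an induced $2K_2$, Lemma~\ref{2k2} --- which, as stated and proved in the paper, nowhere uses connectivity --- would force $G\in\mathcal{G}$, and every member of $\mathcal{G}$ is a join, hence connected, a contradiction. This buys you something concrete: the paper's Case~1 tacitly assumes a component with $|C_1|\ge 3$ and so does not literally cover $G\cong mK_2$ with $m\ge 3$ (one can check directly that $\chi_{_{NL}}(mK_2)\le n-3$ there, so the theorem is unharmed, but the paper's construction as written skips this configuration), whereas your $2K_2$ argument handles it uniformly. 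Likewise your $|W|\ge 2$ case replaces the paper's explicit coloring by the inequality $\chi_{_{NL}}(G)=\max\{\chi_{_{NL}}(G'),|W|\}\le\max\{n-|W|,|W|\}\le n-2$ from Remark~\ref{z1}; that remark is asserted in the paper without proof, but it and the trivial bound $\chi_{_{NL}}(H)\le|V(H)|$ (singleton classes) are immediate, so the dependence is harmless. In short: same skeleton and same key lemmas, but your treatment of the disconnected case is cleaner and closes a small lacuna in the paper's Case~1.
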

\begin{proof}
	If  $G \in {\cal F} \cup {\cal G}$,  then, according to Theorem \ref{xmln-1}, $\chi _{_{L}}(G)=n-1$.
	This means that  $\chi _{_{NL}}(G)\ge n-1$, since $\chi _{_{L}}(G)\le \chi _{_{NL}}(G)$.
	Hence, from Theorem \ref{xmln}, we derive that $\chi _{_{NL}}(G)= n-1$.
	
	Let $H$ be a complete bipartite graph of order $n-1$. 
	According to Theorem \ref{xnln}, $\chi _{_{NL}}(H)= n-1$. 
	Let $G \cong H + K_1$ such that $V(K_1)=\{u\}$. 
	If $v\in V(H)$, then it is straightforward to check that the $(n-1)$-coloring of $G$ $\Pi=\{S_1,\ldots,S_{n-1}\}$ such that $S_1=\{u,v\}$, is an NL-coloring of $G$. 
	Thus, $\chi _{_{NL}}(G)= n-1$.

	Conversely, let $G$ be a  graph such that $\chi _{_{NL}}(G)=n-1$.
	We distinguish two case depending  on  whether or not the graph $G$ is connected.
	
	Suppose that  $G$ is a connected graph.
	By Theorem \ref{xnln} and Proposition \ref{d2d4}, it follows that $2 \le {\rm diam} (G) \le 3$.
	If either $\rm diam(G)=2$ or $2K_2 \prec G$, then according to Proposition \ref{d2d4}, Theorem \ref{xmln-1} and Lemma \ref{2k2}, we derive that $G \in {\cal F} \cup {\cal G}$.
	If  $G$ is a graph of diameter  $\rm diam(G)=3$ such that $2K_2 \not\prec G$, then from Lemma \ref{diam3} and Lemma \ref{lemxmln-1}, it follows that $G \in {\cal F}$.
	
	Assume that $G$ is a non-connected graph. 
	We distinguish cases depending on the connected components of $G$.
	
	\vspace{.1cm}
	{\bf Case 1}. All components of $G$ have at least two vertices. 
	Let $C_1$, $C_2$ a pair of components of $G$ such that $|C_1|\ge3$.
	Take $u_1,v_1,w_1\in C_1$ and $u_2,w_2\in C_2$ such that $u_1v_1,v_1w_1\in E(G)$.
	Then, it is straightforward to check that the $(n-2)$-coloring  $\Pi=\{S_1,\ldots,S_{n-2}\}$ such that $S_1=\{u_1,u_2\}$,  $S_2=\{v_1\}$ and $S_3=\{w_1,w_2\}$, is an NL-coloring.
	Thus, $\chi _{_{NL}}(G)\le n-2$.

	\vspace{.1cm}
	{\bf Case 2}. $G$ has at least two trivial components, i.e., 
	$G$ contains at least two isolated vertices $u$ and $v$.  
	Let $C_1$, $C_2$, $C_3$  be three components of $G$ such that $C_1=\{x\}$, $C_2=\{y\}$ and $z,w \in C_3$.
	Then, it is straightforward to check that the $(n-2)$-coloring  $\Pi=\{S_1,\ldots,S_{n-2}\}$ such that $S_1=\{x,z\}$ and  $S_2=\{y,w\}$, is an NL-coloring.
	Thus, $\chi _{_{NL}}(G)\le n-2$.
	
	\vspace{.1cm}
	{\bf Case 3}. $G$ contains exactly one isolated vertex $u$.  
	Let $H$ be the graph of order $n-1$ without isolated vertices, such that $G\cong H+K_1$ and $V(K_1)=\{u\}$.
	Observe that $\chi _{_{NL}}(H)= \chi _{_{NL}}(G)=n-1$, since  if $\Pi=\{S_1,S_2,\ldots,S_{k}\}$ is a $k$-NL-coloring of $H$,  then the $k$-coloring  $\Pi=\{S'_1,S_2,\ldots,S_{k}\}$ where $S'_1=S_1 \cup\{u\}$, is an NL-coloring of $G$. Thus, according to Theorem  \ref{xnln}, $H$ is a complete multipartite graph.
	\end{proof}

\newpage
\section{Join and disjoint union}\label{ss.join}


This section is devoted to analysing the behavior of the NLC-number with respect to two graph operations: join and disjoint union.

A graph $G=(V,E)$ is  a \emph{join graph} if it is the join $G_1\vee G_2$ of two graphs $G_1=(V_1,E_1)$ and $G_2=(V_2,E_2)$, i.e., if $V=V_1\cup V_2$ and $E=E_1 \cup E_2 \cup E'$, where $E'=\{v_1v_2 :v_1\in V_1, v_2\in V_2 \}$.

Some examples of graphs obtained as the  join of  two graphs are the fan $F_n= K_1 \vee P_{n-1}$, the wheel $W_n=K_1 \vee C_{n-1}$ and the complete bipartite graph $K_{h,k}=\overline{K_h}\vee \overline{K_k}$.

 Clearly, $\chi (G_1\vee G_2) = \chi (G_1)+\chi(G_2)$. 
 In \cite{bean14}, it is shown that, if $G_1$ and $G_2$ are two connected graphs of diameter at most two, then $\chi_{_{L}} (G_1\vee G_2) = \chi_{_{L}} (G_1)+\chi_{_{L}} (G_2)$.
But, in general,  $\chi_{_{L}} (G_1\vee G_2) \geq \chi_{_{L}} (G_1)+\chi_{_{L}} (G_2)$. 
For example, $\chi_{_{L}} (P_{10})= 3$ and $\chi_{_{L}} (P_{10} \vee P_{10}) = 8$ (see \cite{bean14}).

Next, we  study the NLC-number of the join of two graphs.

\begin{remark}
\rm{As a straightforward consequence of the definition, the following properties hold.
If $G_1$,  $G_2$ and $G_3$ are three graphs, then

\begin{enumerate}
\item $G_1 \vee G_2$ is a connected graph of diameter at most 2.
\item $G_1 \vee (G_2 \vee G_3) \cong (G_1 \vee G_2) \vee G_3$.
\end{enumerate}}
\end{remark}

Let $r, n_1,\ldots,n_r,n$ be integers such that $2\le r$, $1 \le n_1 \le \ldots \le n_r$ and $n=n_1+\ldots n_r$.
The complete $r$-partite graph $K_{n_1,\ldots,n_r}$ is the graph
$\overline{K_{n_1}} \vee \ldots \vee \overline{K_{n_r}}$.
In the previous section, we have shown that the NLC-number of a complete multipartite graph  equals the order.
Thus,  $\chi_{_{NL}}(\overline{K_{n_1}} \vee \ldots \vee \overline{K_{n_r}})=\chi_{_{NL}}(K_{n_1,\ldots,n_r})=n_1+ \ldots + n_r=\chi_{_{NL}}(\overline{K_{n_1}} )+ \ldots + (\overline{K_{n_r}})$.
Next theorem extends this result to the join of general graphs.

\begin{theorem}\label{jointhm} For every pair of graphs $G_1$ and $G_2$,
$\chi_{_{NL}} (G_1\vee G_2) = \chi_{_{NL}} (G_1)+\chi_{_{NL}} (G_2).$
\end{theorem}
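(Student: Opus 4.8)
The plan is to exploit the defining feature of the join: every vertex of $V_1$ is adjacent to every vertex of $V_2$. First I would record the structural fact that, in \emph{any} coloring $\Pi$ of $G_1\vee G_2$, no color class can meet both $V_1$ and $V_2$, because any $v_1\in V_1$ and $v_2\in V_2$ are adjacent while a color class is independent. Consequently every coloring $\Pi$ of $G_1\vee G_2$ splits as $\Pi=\Pi_1\cup\Pi_2$, where $\Pi_1$ collects the color classes contained in $V_1$ and $\Pi_2$ those contained in $V_2$; moreover $\Pi_1$ and $\Pi_2$ use disjoint color sets, $\Pi_1$ is a coloring of $G_1$, $\Pi_2$ is a coloring of $G_2$, and $|\Pi|=|\Pi_1|+|\Pi_2|$. (Properness is preserved in both directions, since the join adds edges only between $V_1$ and $V_2$.)

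The core of the argument is an equivalence: $\Pi=\Pi_1\cup\Pi_2$ is an NL-coloring of $G_1\vee G_2$ if and only if $\Pi_1$ is an NL-coloring of $G_1$ and $\Pi_2$ is an NL-coloring of $G_2$. To prove it I would compute, for $u\in V_1$, the set of colors appearing in its neighborhood with respect to $\Pi$. Since $N_{G_1\vee G_2}(u)=N_{G_1}(u)\cup V_2$, and the classes of $\Pi_2$ lie in $V_2$ while those of $\Pi_1$ lie in $V_1$, this color set decomposes as a disjoint union of the $\Pi_1$-colors seen by $u$ inside $G_1$ together with the full set $D_2$ of all colors of $\Pi_2$ (each of which is met, being a nonempty subset of $V_2$). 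The key point is that $D_2$ is the same for every vertex of $V_1$ and is disjoint from the $\Pi_1$-colors. Hence for two vertices $u,v$ lying in a common color class (which, by the structural fact, forces them to the same side, say $V_1$), their $\Pi$-neighborhood color sets differ if and only if the $\Pi_1$-color sets of $N_{G_1}(u)$ and $N_{G_1}(v)$ differ. As there are no same-color pairs straddling $V_1$ and $V_2$, summing this observation over both sides yields the equivalence. Note that this also subsumes the isolated-vertex condition: two $G_1$-isolated vertices of the same color would have identical neighborhood color set $D_2$ in the join, which the NL property forbids.

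Given the equivalence, both inequalities follow immediately. For the lower bound, an optimal NL-coloring of $G_1\vee G_2$ restricts to NL-colorings $\Pi_1,\Pi_2$, so $\chi_{_{NL}}(G_1\vee G_2)=|\Pi_1|+|\Pi_2|\ge \chi_{_{NL}}(G_1)+\chi_{_{NL}}(G_2)$. For the upper bound, take optimal NL-colorings of $G_1$ and $G_2$, relabel so that their color sets are disjoint, and form their union; the equivalence turns it into an NL-coloring of the join using exactly $\chi_{_{NL}}(G_1)+\chi_{_{NL}}(G_2)$ colors.

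The only genuine care needed, and the step I would watch most closely, is the bookkeeping of neighborhood color sets through the decomposition: in particular verifying that every color of $\Pi_2$ is realized in the neighborhood of each $V_1$-vertex (so that the common part $D_2$ really is identical across all of $V_1$) and checking the isolated-vertex corner cases. The structural observation and the two resulting inequalities are then routine.
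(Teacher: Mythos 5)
Your proof is correct and takes essentially the same approach as the paper's: you observe that no color class of the join can meet both $V_1$ and $V_2$, that every vertex sees all classes on the opposite side, and hence that any NL-coloring of $G_1\vee G_2$ splits into NL-colorings of $G_1$ and $G_2$ (and conversely, disjointly-colored NL-colorings of the factors combine). Your explicit bookkeeping of the common color set $D_2$ and the isolated-vertex corner case simply spells out what the paper leaves implicit in its ``clearly'' and its two structural observations.
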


\begin{proof}
If $\Pi_1=\{S_1, \dots, S_h\}$ is an NL-coloring of $G_1$ and $\Pi_2=\{T_1, \dots, T_k\}$ is an NL-coloring of $G_2$ then, clearly,  $\{S_1, \dots, S_h, T_1, \dots, T_k\}$ is an NL-coloring of $G_1\vee G_2$.

Now, let $\Pi$ be an NL-coloring of $G_1\vee G_2$. 
Observe that, given a vertex $v\in V(G_i)$ and a part $S$ of $\Pi$ such that $v\in S$, then $S\subseteq V(G_i)$ ($i\in\{1,2\}$). 
On the other hand, if $v\in V(G_i)$ and  $S\in \Pi$ such that $S\subseteq V(G_j)$, $i,j\in\{1,2\}$  and $i\neq j$, then $d(v,S)=1$. 
As a consequence, reordering if necessary, we can consider  $\Pi=\{S_1, \dots, S_\ell, S_{\ell+1}, \dots, S_t\}$ so that $\{S_1, \dots, S_\ell\}$ is an NL-coloring of $G_1$ and $\{S_{\ell+1}, \dots, S_t\}$ is an NL-coloring of $G_2$.
\end{proof}


The \emph{disjoint union} of two vertex-disjoint graphs  $G$ and $H$ is the graph denoted by  $G + H$ whose vertex  and edge sets are  $V(G)\cup V(H)$ and  $E(G)\cup E(H)$, respectively.
Next, we present some  properties relating  $\chi_{_{NL}}(G + H)$ to $\ \chi_{_{NL}}(G)$ and $\chi_{_{NL}}(H)$.

\begin{theorem}
	Let $G, H$ be two graphs with $\chi_{_{NL}} (G)=k$ and $\chi_{_{NL}} (H)=h$. The following
	bounds hold for $\chi_{_{NL}} (G + H)$ and are  best possible.
	
	\begin{description}
		\item[(i)] $max\{h,k\}\le \chi_{_{NL}} (G + H)$.
		\item[(ii)] If $G$ has exactly  $k$ isolated  vertices and $H$ has exactly  $h$ isolated vertices,
		then $ \chi_{_{NL}} (G + H)= k+h$;
		\item[(iii)] in any other case,  $ \chi_{_{NL}} (G + H)\le k+h-1$.
		\item[(iv)] If $G$ contains a universal vertex, then  $\chi_{_{NL}} (G + G)\le k+1$.
	\end{description}
	
\end{theorem}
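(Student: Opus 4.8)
The plan is to encode everything through the tuples $nr(\cdot\,|\,\Pi)$ introduced above and to use two structural facts about a disjoint union $G+H$: a vertex of $V(G)$ has all its neighbors inside $G$, and two vertices lying in different classes of a coloring are automatically neighbor-located because their unique $0$-entry occupies different coordinates. With these, \textbf{(i)} is immediate: given any NL-coloring $\Pi$ of $G+H$, restrict it to $V(G)$ and discard empty classes; since $N_{G+H}(u)=N_G(u)$ for every $u\in V(G)$, the colors seen by each vertex of $G$ are unchanged, so the restriction is an NL-coloring of $G$ using at most $\chi_{_{NL}}(G+H)$ colors, whence $k\le\chi_{_{NL}}(G+H)$, and symmetrically $h\le\chi_{_{NL}}(G+H)$. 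For \textbf{(ii)}, the isolated vertices of $G+H$ are exactly those of $G$ together with those of $H$, so $G+H$ has precisely $k+h$ isolated vertices; applying the identity $\chi_{_{NL}}(F)=\max\{\chi_{_{NL}}(F[V\setminus W]),|W|\}$ of Remark~\ref{z1} with $F=G+H$ yields $\chi_{_{NL}}(G+H)\ge k+h$, while the reverse inequality is the trivial bound obtained by coloring the two parts with disjoint palettes.

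For \textbf{(iii)}, I would fix minimum NL-colorings $\Pi_G=\{S_1,\dots,S_k\}$ of $G$ and $\Pi_H=\{T_1,\dots,T_h\}$ of $H$ and try to merge one class of each into a single independent class $S_i\cup T_j$, producing a coloring with $k+h-1$ classes. Since this merged class is the only one containing vertices of both parts, the sole point to verify is that no $u\in S_i$ and $v\in T_j$ receive equal tuples. In the new coloring $u$ has entry $2$ in every coordinate coming from a pure $H$-class and $v$ has entry $2$ in every coordinate coming from a pure $G$-class, so $u$ and $v$ can coincide only if all their remaining coordinates equal $2$ as well, i.e.\ only if $u$ is isolated in $G$ and $v$ is isolated in $H$. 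Thus the merge fails only when $S_i$ holds an isolated vertex of $G$ and $T_j$ holds one of $H$. Because an NL-coloring places at most one isolated vertex per class, if $G$ has fewer than $k$ isolated vertices then some $S_i$ is free of isolated vertices and may be merged with an arbitrary $T_j$; symmetrically if $H$ has fewer than $h$. As the simultaneous failure of both conditions is exactly the hypothesis of \textbf{(ii)}, every remaining case admits a valid merge and gives $\chi_{_{NL}}(G+H)\le k+h-1$.

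For \textbf{(iv)}, let $w$ be a universal vertex of $G$; in every NL-coloring $\Pi=\{S_1,\dots,S_k\}$ of $G$ the vertex $w$ is alone in its class, say $S_k=\{w\}$, and every other vertex is adjacent to $w$. Taking two copies $G_1,G_2$ with universal vertices $w_1,w_2$, I would color the non-universal vertices of both copies by $S_1,\dots,S_{k-1}$ according to $\Pi$, place $w_1$ in a class $S_k$ and $w_2$ in a fresh class $S_{k+1}$, for $k+1$ colors in total. A non-universal vertex of $G_1$ then has its last two coordinates equal to $(1,2)$ and a non-universal vertex of $G_2$ equal to $(2,1)$, so the two copies are separated inside every class they share; and within one copy two vertices of a common class are already separated on the coordinates $1,\dots,k-1$, since in $\Pi$ the $w$-coordinate is the constant $1$ on all non-universal vertices and hence never does the separating. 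Thus the coloring is neighbor-locating and $\chi_{_{NL}}(G+G)\le k+1$.

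The routine part is \textbf{(i)}--\textbf{(ii)}; the crux is the case analysis in \textbf{(iii)} showing that a cross-collision forces both vertices to be isolated, together with the observation in \textbf{(iv)} that universality makes the $w$-coordinate constant, so that one extra color already desynchronizes the two copies. For the sharpness claims I would give matching examples: with $G$ having no isolated vertex and $H=K_1$ one gets $\chi_{_{NL}}(G+K_1)=k=\max\{k,1\}=k+1-1$, realizing \textbf{(i)} and \textbf{(iii)} simultaneously, while $G=K_n$ yields $\chi_{_{NL}}(2K_n)=n+1$, realizing \textbf{(iv)}, since any $n$-coloring forces each clique to use all $n$ colors and then the two vertices of color $i$ collide.
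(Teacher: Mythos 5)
Your proposal is correct and follows essentially the same route as the paper: restricting an NL-coloring of $G+H$ to each part for (i), counting the $k+h$ isolated vertices for (ii), merging a class of $\Pi_G$ containing no isolated vertex with a class of $\Pi_H$ for (iii) (your cross-collision analysis makes explicit exactly why the paper may choose $S_1$ free of isolated vertices), and recoloring the universal vertex of the second copy with a fresh color for (iv). The only divergence is in the tightness witnesses, where your choices are valid but slightly weaker than the paper's: you realize equality in (i) and (iii) only via $H=K_1$ (so $h=1$), whereas the paper uses stars $S_{1,k-1}+S_{1,h-1}$ and the case $\ell=k+h-1$ of its realization theorem to cover all pairs $3\le h\le k$, while for (iv) your explicit argument that $\chi_{_{NL}}(K_n+K_n)\ge n+1$ actually supplies a justification the paper only asserts.
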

\begin{proof} Any NL-coloring of $G+H$ induces an NL-coloring of  $G$ and an NL-coloring  of $H$,
	thus the first statement is true. For $k\geq 3$ and $h\leq k$,  let
	$G$ and $H$ be the stars $S_{1,k-1}$ and $S_{1,h-1}$, respectively. Since $ \chi_{_{NL}} (S_{1,k-1})=k$,
	$ \chi_{_{NL}} (S_{1,h-1})=h$ and $ \chi_{_{NL}} (S_{1,k-1}+S_{1,h-1})=k$, we have that the 
	given bound is tight. 
	
	To prove {\bf(ii)}, notice that the union of an NL-coloring of $G$ and an NL-coloring
	of $H$ produces an NL-coloring of $G+H$, so  $ \chi_{_{NL}} (G + H)\leq k+h$. 
	On the other hand, since $G+H$ has $k+h$ isolated vertices, we have that $ \chi_{_{NL}} (G + H)\geq k+h$, and the equality follows.
	
	In order to prove {\bf(iii)}, let $\Pi=\{S_1, \dots, S_k\}$ and  $\Pi'=\{S'_1, \dots, S'_h\}$  be
	NL-colorings of $G$ and $H$, respectively.  
	Without loss of generality, we can assume that $G$ has less than $k$ isolated vertices,
	and, therefore, that $S_1$ contains no isolated vertices. 
	In such a case, $\{S_1\cup S'_1, S_2, \dots, S_k,S'_2, \dots, S'_h\}$ is an $(k+h-1)$-NL-coloring
	of $G+H$, establishing the desired bound. 
	To see that this bound is tight, consider the  case $\ell =k+h-1$ of proof of Theorem \ref{rrtt}.

	Finally, to prove  item {\bf(iv)}, observe that from any given NL-coloring of $G$, 
	we can obtain  an NL-coloring of $G+G$
	by  painting the universal vertex of the second copy with a new color $k+1$, and painting any other vertex of the second copy
	with the same color as it has in the first copy. Since $\chi_{_{NL}} (K_k + K_k)\le k+1$, the bound is the best possible.
\end{proof}

\begin{figure}[!ht]
\begin{center}
\includegraphics[width=0.25\textwidth]{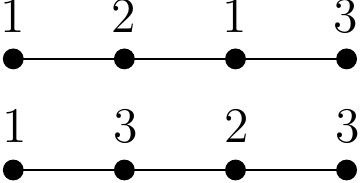}
\caption{An NL-coloring of $2P_4$.}\label{fig_2P4}
\end{center}
\end{figure}

\begin{theorem}\label{rrtt}
Let $h,k$ be integers  such that $3 \le h \le k$.
 Then, for every $\ell\in [k, k+h]$, there exist graphs $G$ and $H$ such that $\chi_{_{NL}} (G)=k$, $\chi_{_{NL}} (H)=h$ and $\chi_{_{NL}}(G + H)=\ell$.
\end{theorem}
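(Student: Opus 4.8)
The plan is to realize every value $\ell$ in the range $[k,k+h]$ by a single family indexed by an integer $t$, where $t$ controls the number of mutually ``forced'' colors contributed by isolated vertices. Writing $\ell=k+t$, the parameter $t$ ranges over $\{0,1,\dots,h\}$ exactly as $\ell$ ranges over $\{k,k+1,\dots,k+h\}$, so it suffices to produce, for each such $t$, graphs $G$ and $H$ with $\chi_{_{NL}}(G)=k$, $\chi_{_{NL}}(H)=h$ and $\chi_{_{NL}}(G+H)=k+t$.

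The construction I would take is $G=\overline{K_k}$ and $H=S_{1,h-1}+\overline{K_t}$; that is, $H$ is a star on $h$ vertices together with $t$ extra isolated vertices (for $t=0$ simply $H=S_{1,h-1}$). The three NLC-numbers then follow from two results already available. First, $\chi_{_{NL}}(\overline{K_k})=k$ and, since $S_{1,h-1}\cong K_{1,h-1}$ is a complete multipartite graph of order $h$, Theorem~\ref{xnln} gives $\chi_{_{NL}}(S_{1,h-1})=h$. Second, the reduction formula of Remark~\ref{z1}, namely $\chi_{_{NL}}(F)=\max\{\chi_{_{NL}}(F[V(F)\setminus W]),|W|\}$ with $W$ the set of isolated vertices of $F$, lets me read off the NLC-number of each of these graphs from its non-isolated part and its number of isolated vertices.

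Applying this to $H$, whose isolated set has size $t$ and whose non-isolated part is the star $S_{1,h-1}$, gives $\chi_{_{NL}}(H)=\max\{h,t\}$, which equals $h$ precisely because $0\le t\le h$. Applying it to the disjoint union $G+H$ is the crux of the argument: $G+H$ has the same star as its unique non-trivial component, but now $k+t$ isolated vertices (the $k$ from $G$ together with the $t$ from $H$), so Remark~\ref{z1} yields $\chi_{_{NL}}(G+H)=\max\{h,k+t\}=k+t$, the last equality holding because $k+t\ge k\ge h$. This is exactly $\ell$.

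I do not expect a serious obstacle beyond correctly identifying the interpolating family; the only point that needs care is that merging color classes across the two summands cannot beat the value $k+t$, and this is precisely what the $|W|$ term of Remark~\ref{z1} guarantees, since the $k+t$ isolated vertices of $G+H$ pairwise share the empty neighbor-color-set and hence must all receive distinct colors. As a consistency check, $t=0$ recovers $\ell=k$ (equality in the lower bound of part (i)), $t=h$ recovers $\ell=k+h$ and lands in case (ii) of the previous theorem, and $t=h-1$ gives $\ell=k+h-1$, which is exactly the case invoked there to establish the tightness of part (iii).
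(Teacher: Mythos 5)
Your proposal is correct, but it takes a genuinely different route from the paper. The paper splits the range into three cases: for $\ell=k$ it uses the stars $S_{1,k-1}+S_{1,h-1}$; for $\ell\in[k+1,k+h-1]$ it takes $G=G_k$, the extremal connected graph of order $k(2^{k-1}-1)$ from Section~\ref{bounds}, together with $H$ built from $K_{\ell-k}$ by hanging $h-1$ leaves on each clique vertex, and exploits the fact that $G_k$ saturates all admissible color codes, so the clique is forced to consume $\ell-k$ fresh colors; and for $\ell=k+h$ it uses $\overline{K_k}+\overline{K_h}$. You instead interpolate with the single family $G=\overline{K_k}$, $H=S_{1,h-1}+\overline{K_t}$ with $t=\ell-k\in\{0,\dots,h\}$, and all three NLC-values drop out of Theorem~\ref{xnln} (stars and empty graphs have NLC-number equal to their order) combined with the max formula of Remark~\ref{z1}, whose lower-bound half is exactly your observation that isolated vertices pairwise share the empty neighbor-color-set and hence require distinct colors; I verified the arithmetic $\max\{h,t\}=h$ and $\max\{h,k+t\}=k+t$ under $3\le h\le k$ and $0\le t\le h$, and your endpoint checks against items (ii) and (iii) of the disjoint-union theorem are consistent. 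Your construction buys uniformity and brevity --- no case analysis and no appeal to the extremal graph $G_k$ --- at the cost of relying essentially on isolated vertices; the paper's middle-range construction, by contrast, realizes $\ell\in[k+1,k+h-1]$ with \emph{connected} witnesses $G$ and $H$, a strictly stronger form of realization (note that $\ell=k+h$ genuinely forces isolated vertices, by item (iii)), and that connected case $\ell=k+h-1$ is the one the paper reuses as the tightness example for item (iii).
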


\begin{proof}
For the case $\ell=k$,  consider the stars $G\cong S_{1,k-1}$ and $H\cong S_{1,h-1}$.
Then, it is easy to check that $\chi_{_{NL}} (S_{1,k-1} + S_{1,h-1})=\chi_{_{NL}} (S_{1,k-1})=k$.

Case $\ell \in [k+1, k+h-1]$.
Let $G$ be a connected graph of order $k(2^{k-1}-1)$ with $\chi_{_{NL}} (G)=k$ 
(take, for instance, the graph $G_k$ described in Section \ref{bounds})
 and let $H$ be the graph obtained from the complete graph $K_{\ell-k}$ by  hanging  $h-1$ leaves to each of its vertices. 
 Notice that  $\chi_{_{NL}} (H)=h$.
 Take an NL-coloring of $G$ with $k$ colors and notice that it is not possible to color any other  vertex of $G+H$ with these colors.
Let $\{1,\dots, \ell-k\}$ be colors different from the previous ones and  assign these colors  to the vertices of $K_{\ell-k}$.  
Color with $\{1,\dots, \ell-k\} \setminus \{i\}$ the leaves hanging from the vertex of $K_{\ell-k}$ with color $i$. 
In this way, we obtain an NL-coloring of $G + H$ with $k + (\ell-k)= \ell$ colors and it is not possible to do so with less colors.
 Then, $\chi_{_{NL}} (G + H)=\ell$.

 For the case $\ell=k+h$,  take the empty graphs $G\cong \overline{K_k}$ and $H\cong \overline{K_h}$
and observe that  $G + H \cong\overline{K_{k+h}}$ and  $\chi_{_{NL}} (G + H)=k+h$.
\end{proof}

\newpage
\section{Split and Mycielski graphs}\label{ss.split}

This section is devoted to investigating the NLC-number in  two important families of graphs: split graphs and Mycielski graphs.

A \emph{split graph} is a graph such that the vertices can be partitioned into a clique and an independent set.  
When every vertex in the independent set is adjacent to every vertex in the clique it is said to be a \emph{complete split graph}. 
Observe that complete split graphs are examples of complete multipartite graphs that we just studied in the previous section. 
We have taken a step further and we have studied the NLC-number of general connected split graphs.

For any connected split graph $G=(V,E)$ we can assume that  there are two subsets $U$ and $W$ of  $V$ such that

\begin{enumerate}[(i)]
			\item  $V=U\cup W$, $U\cap W=\emptyset$;
			\item $G[U]$ is a complete graph;
			\item $W$ is a maximal independent set, i.e., $W$ is an independent set that for each vertex $u\in U$, there exists a vertex $w\in W$ such that $uw\in E$.
		\end{enumerate}
		
For every $X\subseteq U$, we define  $\mathcal{P}(X)=\{ w\in W : N(w)=X \}$ and  $\rho (G)=\max \{ |X| + |\mathcal{P}(X)|  : X\subseteq U \} $. 
Observe that, by definition, $\rho (G)\, \ge |U|$.

\begin{theorem}
If $G$ is a connected split graph, then
		$$\chi _{_{NL}}(G)=
		\left\{
		\begin{array}{l r}
		\rho (G),&     \hbox{if }  \mathcal{P}(X)= \emptyset \hbox{ for all  }X\subseteq U \hbox{ s.t. } |X|=|U|-1  \\
		\max \{ |U| +1,\rho (G) \},  &\hbox{if }  \mathcal{P}(X)\not= \emptyset \hbox{ for some }X\subseteq U \hbox{ s.t. } |X|=|U|-1\\
		\end{array}
		\right.
		$$
\end{theorem}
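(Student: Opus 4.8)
The plan is to construct an explicit NL-coloring achieving the claimed value and then prove a matching lower bound. Throughout, let $G=(V,E)$ be a connected split graph with $V=U\cup W$ as in conditions (i)--(iii), where $G[U]$ is a complete graph on the vertices $u_1,\dots,u_{|U|}$ and $W$ is a maximal independent set. The key observation is that in any coloring of $G$, the clique $U$ forces all its $|U|$ vertices to receive distinct colors, so $\chi _{_{NL}}(G)\ge |U|$. The role of $\rho(G)$ is to capture the obstruction coming from \emph{twins} in $W$: if two vertices $w,w'\in W$ satisfy $N(w)=N(w')=X$, they have identical neighborhoods, so to be neighbor-located they must receive \emph{different} colors. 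Since the $|\mathcal{P}(X)|$ vertices with neighborhood $X$ are mutually non-adjacent and pairwise twins, they all need distinct colors, none of which may be one of the $|X|$ colors appearing on $X$ itself (as those clique-colors are occupied by neighbors). Hence $|X|+|\mathcal{P}(X)|$ colors are forced, giving $\chi _{_{NL}}(G)\ge\rho(G)$.

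\textbf{The lower bound in the second case.}
The first thing I would establish is the extra "$+1$" that appears when some $X\subseteq U$ with $|X|=|U|-1$ has $\mathcal{P}(X)\neq\emptyset$. Suppose $w\in W$ satisfies $N(w)=X=U\setminus\{u_j\}$. I claim that a coloring with only $|U|$ colors cannot be neighbor-locating. Indeed, with exactly $|U|$ colors, every color is used on $U$, so $w$ must share a color with some clique vertex $u_i$; since $w\not\sim u_i$ we need $u_i\notin N(w)=X$, forcing $u_i=u_j$. Now $w$ and $u_j$ lie in the same color class. The set of colors on $N(u_j)$ includes all colors except that of $u_j$ (as $u_j$ is adjacent to every other clique vertex and has the full palette available through $U$), while the colors on $N(w)=X$ are exactly those of $U\setminus\{u_j\}$, i.e.\ all colors except that of $u_j$. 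So $u_j$ and $w$ have the same neighborhood-color-set, contradicting the NL-property. Thus $\chi _{_{NL}}(G)\ge |U|+1$, which combined with $\ge\rho(G)$ yields the claimed lower bound $\max\{|U|+1,\rho(G)\}$.

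\textbf{Constructing the colorings (upper bounds).}
For the upper bound I would color $U$ with colors $1,\dots,|U|$, assigning $u_i$ the color $i$. For each $X\subseteq U$, the twins in $\mathcal{P}(X)$ must receive distinct colors avoiding the $|X|$ colors used on $X$; there are at most $|W|$-many such vertices to place but the binding constraint per class is exactly $|X|+|\mathcal{P}(X)|\le\rho(G)$, so colors from a palette of size $\rho(G)$ (in the first case) or $\max\{|U|+1,\rho(G)\}$ (in the second) suffice, provided I can simultaneously use spare colors to neighbor-locate twins across \emph{different} sets $X,X'$ and to fix the single bad pair $(u_j,w)$ identified above. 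The main subtlety — and what I expect to be the principal obstacle — is verifying that vertices in $\mathcal{P}(X)$ and $\mathcal{P}(X')$ for $X\neq X'$ are automatically neighbor-located whenever they happen to share a color: this should follow because $X\neq X'$ means their neighborhood-color-sets differ, but one must check carefully that the coloring of $U$ by distinct colors makes these color-sets genuinely distinct (i.e.\ $X$ and $X'$ induce different subsets of the palette $\{1,\dots,|U|\}$). In the second case I would introduce the extra color $|U|+1$ and reassign it to break precisely the forced coincidence between $u_j$ and the offending $w$, then confirm that no new coincidences are created. The bookkeeping of which twins get which leftover colors is routine once the size bound $\rho(G)$ is in hand; the genuine work lies in the case analysis guaranteeing that the constructed partition separates every pair of same-colored vertices.
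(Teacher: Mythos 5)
Your lower-bound arguments are complete and correct: the inequality $\chi_{_{NL}}(G)\ge\rho(G)$ is exactly the paper's argument (the vertices of $X\cup\mathcal{P}(X)$ pairwise require distinct colors), and your direct proof that no NL-coloring with exactly $|U|$ colors exists when some $X=U\setminus\{u_j\}$ with $\mathcal{P}(X)\neq\emptyset$ is present is if anything cleaner than the paper's, which splits into the subcases $\rho(G)\ge |U|+1$ and $\rho(G)=|U|$.

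The upper bound, however, contains a genuine gap: you state the construction but defer exactly the verifications that carry the weight, and you misidentify the crux. Same-colored pairs $v\in\mathcal{P}(X)$, $v'\in\mathcal{P}(X')$ with $X\neq X'$ --- the point you call ``the principal obstacle'' --- are immediate, since $U$ is injectively colored and hence distinct neighborhoods $X\neq X'$ in $U$ yield distinct neighborhood color sets. The real work is the same-colored pairs $u\in U$, $v\in\mathcal{P}(X)$. There $u\notin X$ forces $|X|\le |U|-1$, and it is precisely the case-1 hypothesis ($\mathcal{P}(X)=\emptyset$ whenever $|X|=|U|-1$) that improves this to $|X|\le |U|-2$, so that $N(v)$ sees at most $|U|-2$ colors while $N(u)$ sees at least $|U|-1$; your sketch never locates where the case hypothesis enters the upper bound at all. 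In case 2 the situation is worse: you speak of reassigning the extra color to break ``the single bad pair $(u_j,w)$,'' but in general there are many such pairs --- the set $U'=\{u\in U:\mathcal{P}(U\setminus\{u\})\neq\emptyset\}$ may have several elements, and each class $\mathcal{P}(U\setminus\{u\})$ may contain several twins, only one of which can receive color $|U|+1$, while another may be forced onto the color of $u$ itself. The paper's device is not a local patch: it colors with $|U|+1$ an entire set $W'\subseteq N(U')\cap W$ of pairwise non-twin vertices chosen so that $U'\subseteq N(W')$; then every $u\in U'$ has a neighbor of color $|U|+1$ while no $v\in\mathcal{P}(U\setminus\{u\})$ does, separating all such pairs simultaneously, and the per-class counting must then be redone with color $|U|+1$ reserved (one needs either $k-|X|-1$ spare colors or a representative of $\mathcal{P}(X)$ inside $W'$), which is delicate rather than ``routine bookkeeping.'' Your plan of fixing one coincidence and then ``confirming that no new coincidences are created'' is exactly the unproven part; as written, the upper bound is a plan, not a proof.
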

\begin{proof}
	%
	First we will prove that $\chi _{_{NL}}(G)$ is at least the given value.
	Assume $\chi _{_{NL}}(G)=k$ and let $\Pi =\{ S_1,\dots ,S_k\}$ be an NL-coloring of $G$.
	We say that $u$ has color $i$ if $u\in S_i$.
	Let $X\subseteq U$. The vertices in $\mathcal{P}(X)$ are pairwise twins and  adjacent to all the vertices in $X$.
	Hence, different vertices of $X\cup \mathcal{P}(X)$ have different  colors.
	Therefore, $$\chi _{_{NL}}(G)\ge \max \{ |X| + |\mathcal{P}(X)|  : X\subseteq U \} \,=\rho (G).$$

In addition, if there is a set $X\subseteq U$ such that $|X|=|U|-1$ and $\mathcal{P}(X)\not= \emptyset$, we claim that
$\x (G)\ge |U|+1$.
Indeed,  if $\rho (G)\ge |U|+1$, then the assertion is obvious. Otherwise, $\rho (G)=|U|$, which implies  $\mathcal{P} (U)=\emptyset$ and
	$|\mathcal{P} (X)|=1$. Let $w\in W $ be the  only vertex   in $\mathcal{P}(X)$ and let $u\in U $ be the  only vertex in $U \setminus X$.
Since $N(u) \cap N(w)$ induces a complete graph of size $ |U|-1$, we have that   $\chi _{_{NL}}(G) \ge  |U| +1$.

	
	%
	Now we will prove that $\chi _{_{NL}}(G)$ is at most the given value.
	
	%
	First, suppose that $\mathcal{P}(X)= \emptyset$ for all $X\subseteq U$ such that  $|X|=|U|-1$.
	We  construct an NL-coloring
	$\Pi =\{ S_1,\dots ,S_k\}$, where
	$$k=\rho (G)= \max \{ |X| + |\mathcal{P}(X)|  : X\subseteq U \}.$$

Let $U=\{ x_1,\dots ,x_r\}$ and color each  $x_i$ with $i$ for $1\leq i \leq r$. Notice that this is possible since $k=\rho (G)\ge |U|$.
Then, for every set $X\subseteq U$,   color the elements of $\mathcal{P}(X)$  using  $| \mathcal{P} (X) |$ distinct colors chosen from the ones not used to color the vertices of $X$.
	Notice that this is possible since $k- |X|\ge |\mathcal{P}(X)|$.


	We claim that $\Pi$ is an NL-coloring.
	Indeed, two vertices with the same color are non adjacent by construction.
	Now, suppose that $u$ and $v$ have the same color $i$, for some $i\in \{ 1,\dots ,k\}$.
	We  prove that $u$ and $v$ are neighbor-located by $\Pi$.
	We consider two cases:  either $u,v\in W$; or $u\in U$ and $v\in W$.

		In the first case,  to prove that $u$ and $v$ are neighbor-located by $\Pi$, it is enough to see that $N(u)\neq N(v)$. 
		In fact, if $N(u)=N(v)=X$, then $u$ and $v$ belong to $\mathcal{P}(X)$, and thus $u$ and $v$ have different colors, contradicting the assumption.

	In the second case,
	assume that $v\in \mathcal{P}(X)$.
	Then, $u\notin X=N(v)$, and so, $|X|\le |U|-2$.
	The neighbors of $v\in \mathcal{P}(X)$ are colored with exactly $|X|$ different colors, with $|X| \le |U|-2$.
	However, the neighbors of $u$ are colored with at least $|U|-1$ different colors.
	Hence, $u$ and $v$ are neighbor-located by $\Pi$.
	%
	
	%
	Now, suppose that $\mathcal{P}(X)\not= \emptyset$ for some subset $X\subseteq U$ such that  $|X|=|U|-1$.
	We  construct an NL-coloring
	$\Pi =\{ S_1,\dots ,S_k\}$, where $k= \max \{\rho (G), |U|+1\}$.

As before, let $U=\{ x_1,\dots ,x_r\}$ and  color each  $x_i$ with $i$ for $1\leq i \leq r$. Again, this is possible since  $k\ge \rho (G)\ge |U|$.

	Let $U'=\{ u\in U : \mathcal{P}(U\setminus \{ u \})\not= \emptyset \}$ and take a maximal twin-free subset $W'$ of $N(U')\cap W$. Notice that, such a set exists and, by construction, $U'\subseteq N(W')$. Color all the vertices of $W'$ with color $r+1$.
		Finally, for every subset $X \subseteq U$,
	 color the vertices in $\mathcal{P}(X) \setminus W'$ with different colors chosen from the ones not used to color the vertices of $X$, and without using color $r+1$.
	Notice that it is possible since $k- |X|\ge |\mathcal{P}(X)|$.

	We claim that $\Pi$ is an NL-coloring.
	Indeed, vertices with the same color are non adjacent by construction.
	Now suppose that $u$ and $v$ have the same color $i$, for some $i\in \{ 1,\dots ,k\}$.
	Without loss of generality, we may distinguish two cases:  $u,v\in W$; or $u\in U$ and $v\in W$.
	
	In the first case, $u\in \mathcal{P} (X_u)$ and $u\in \mathcal{P} (X_v)$, where $N(u)=X_u\not=N(v)=X_v$, for some $X_u,X_v\subseteq U$.
	Therefore, the set of colors of the neighbors of $u$ and $v$ are different.
	Hence, $u$ and $v$ are  neighbor-located by $\Pi$.
	
	In the second case,
	assume that $v\in \mathcal{P}(X)$.
	Then, $u\notin X=N(v)$, and so $|X|\le |U|-1$.
	If $|X|\le |U|-2$, the neighbors of $v\in \mathcal{P}(X)$ are colored with exactly $|X|\le |U|-2$ different colors.
	However, the neighbors of $u$ are colored with at least $|U|-1$ different colors.
	Hence, $u$ and $v$ are neighbor-located by $\Pi$.
	
	If $|X|\le |U|-1$, the neighbors of $v\in \mathcal{P}(X)$ are colored with exactly $|X|$ different colors from $\{1,\dots, r\}$
	However, at least one neighbor of $u$ has color $r+1$.
	Hence, $u$ and $v$ are neighbor-located by $\Pi$.
\end{proof}

\begin{remark}
\rm{	The value of the NLC-number  obtained for general split graphs  fits with some known results, such as $\chi _{_{NL}}(S_{1,n-1})=n$, $\chi _{_{NL}}(K_n)=n$, and $\chi _{_{NL}}(G)=n$, whenever $G$ is a complete split graph.}
\end{remark}

Another interesting class of graphs are Mycielski graphs.
Given a graph $G$ of order $n$, the Mycielski graph $\mu(G)$ of $G$ is a graph of order $2n+1$ that contains $G$ as an induced
subgraph. Concretely, if $V(G)=\{v_1, \dots, v_n\}$, then $V(\mu(G))=\{v_1, \dots, v_n\} \cup \{u_1, \dots, u_n\} \cup \{w\}$ and $E(\mu(G))=E(G) \cup \{wu_i \, : \, 1\leq i\leq n\} \cup \{v_i u_j \, : \, 1\leq i, j\leq n, \, v_iv_j \ E(G)\}$ (see Figure \ref{myc}). 
Thus,  $|E(\mu(G))|=3\, |E(G)|+n$.
From now on, we will use this terminology when referring to the set of vertices of a graph $G$ and its Mycielski graph $\mu (G)$.

\begin{figure}[!ht]
	\begin{center}
		\includegraphics[width=0.70\textwidth]{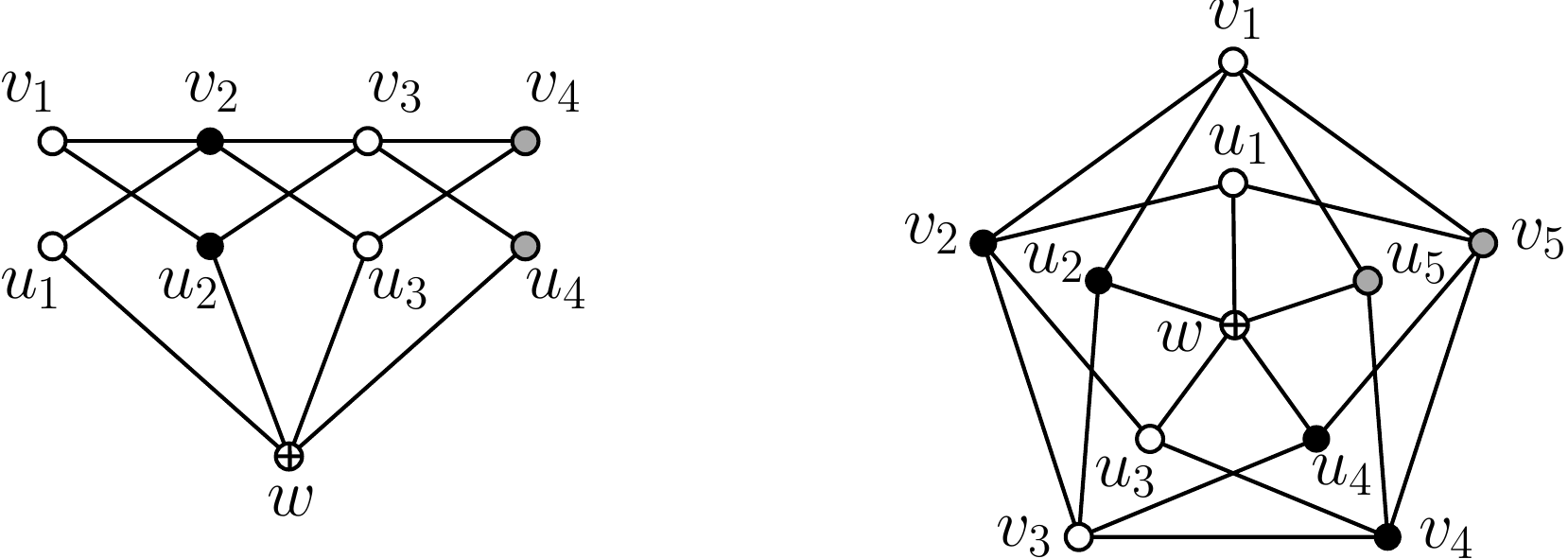}
		\caption{A pair of Mycieslki graphs.
		Left: $\mu(P_5)$. Right: $\mu(C_5)$.}.
		\label{myc}
	\end{center}
\end{figure}

Mycielski \cite{myci} designed these graphs to prove that it is possible to increase the chromatic number of a graph without increasing the clique number. 
More precisely,  $\omega(\mu(G))=\max(2,\omega(G))$ and  $\chi(\mu(G))=\chi(G)+1$.

Next, we give a  similar partial result for the NLC-number.

\begin{prop} \label{p:myci} 
For every graph $G$, $\chi_{_{NL}}(\mu(G))\leq \chi_{_{NL}}(G)+1$.
\end{prop}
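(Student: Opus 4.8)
The plan is to take a minimum NL-coloring of $G$ and extend it to $\mu(G)$ by introducing a single new color. Suppose $\chi_{_{NL}}(G)=k$ and let $\Pi=\{S_1,\dots,S_k\}$ be an NL-coloring of $G$ on the vertex set $\{v_1,\dots,v_n\}$. First I would color the "shadow" vertices $u_i$ with exactly the same color that $v_i$ receives, and assign the apex vertex $w$ the brand-new color $k+1$. This uses $k+1$ colors in total, so it suffices to verify that the resulting partition $\Pi'$ is a proper NL-coloring of $\mu(G)$.

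The first thing to check is that $\Pi'$ is a proper coloring, i.e.\ each color class is independent. The class of color $k+1$ is the singleton $\{w\}$, hence independent. For a color class $S_i'=S_i\cup\{u_j : v_j\in S_i\}$, note that the $u_j$'s form an independent set in $\mu(G)$ (no two shadow vertices are ever adjacent), and a shadow vertex $u_j$ is adjacent only to $w$ and to vertices $v_\ell$ with $v_\ell\sim v_j$ in $G$; since $v_j\in S_i$ and $S_i$ is independent, no such $v_\ell$ lies in $S_i$, so $u_j$ has no neighbor of color $i$. Together with the independence of $S_i$ itself, this shows $S_i'$ is independent.

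The main work is the neighbor-location condition, which I would verify by comparing the color-sets of neighborhoods for each pair of equally colored vertices. I would organize this by cases according to the types of the two vertices. The key structural observations to exploit are: the neighborhood colors of $v_i$ in $\mu(G)$ strictly contain those it had in $G$, while a shadow vertex $u_i$ sees precisely the colors of $N_G(v_i)$ together with color $k+1$ (coming from $w$). For two original vertices $v_i,v_j$ of the same color, the old NL-coloring already neighbor-located them in $G$, and the only colors newly added to their $\mu(G)$-neighborhoods come from adjacent shadow vertices, whose colors mirror the colors of $N_G(v_i)$ and $N_G(v_j)$; I would argue the distinguishing color survives. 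For two shadow vertices $u_i,u_j$ of the same color, both carry color $k+1$ from $w$, so they are distinguished exactly when $N_G(v_i)$ and $N_G(v_j)$ have different color-sets, which again follows from $\Pi$ being an NL-coloring. The remaining and most delicate case is a mixed pair $v_i, u_j$ of the same color: here $u_j$ always contributes color $k+1$ (via $w$) whereas $v_i$ never sees color $k+1$ (the apex is adjacent only to shadow vertices), so these two are neighbor-located automatically by the presence or absence of the new color.

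The step I expect to be the main obstacle is the original-original case $v_i,v_j$: one must confirm that passing from $G$ to $\mu(G)$ does not accidentally equalize the two neighborhood color-sets by adding the distinguishing color to the one that lacked it. I would handle this by observing that each shadow vertex replicates the color of its original, so the set of colors appearing in $N_{\mu(G)}(v_i)$ among $\{1,\dots,k\}$ is exactly the set of colors of $N_G(v_i)$ (the shadow neighbors $u_\ell$ of $v_i$ satisfy $v_\ell\sim v_i$, hence contribute colors already present from the original neighbors $v_\ell$). Consequently the color-set of $N_{\mu(G)}(v_i)$ restricted to $\{1,\dots,k\}$ equals that of $N_G(v_i)$, and since $\Pi$ neighbor-located $v_i$ from $v_j$ in $G$, the same distinguishing color works in $\mu(G)$. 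This closes all cases and establishes that $\Pi'$ is an NL-coloring with $k+1$ colors, giving $\chi_{_{NL}}(\mu(G))\le\chi_{_{NL}}(G)+1$.
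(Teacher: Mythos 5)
Your proposal is correct and takes essentially the same approach as the paper: both extend a $k$-NL-coloring of $G$ to $\mu(G)$ by giving each shadow vertex $u_i$ the color of $v_i$ and the apex $w$ a new color $k+1$, and both rest on the same two observations, namely that the neighborhood color-set of $v_i$ is unchanged (shadow neighbors only duplicate colors of original neighbors, and $v_i \not\sim w$) while that of $u_i$ equals the one of $v_i$ in $G$ together with color $k+1$. Your explicit three-case verification (original--original, shadow--shadow, mixed) merely spells out what the paper compresses into one sentence.
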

\begin{proof}
It is sufficient to prove that every $k$-NL-coloring  $\Pi=\{S_1, \dots,S_k\}$ of $G$ can be extended  to a $(k+1)$-NL-coloring of $\mu(G)$. 
Indeed, let  $\Pi'=\{S'_1, \dots,S'_k, S'_{k+1}\}$, where $S'_h=S_h \cup \{u_i \, : \, v_i\in S_h\}$,  if $1\leq h\leq k$, and $S'_{k+1}= \{w\}$. 
By definition of $\mu (G)$, the sets $S_i'$ are independent in $\mu (G)$, for every $i\in \{1,\dots, k\}$.
Besides, the set of colors of the neighborhood of $v_i$ in $\mu (G)$ is the same as for $v_i$ in $G$; and the set of colors of the neighbors of $u_i$ in $\mu (G)$ is the same as for $v_i$ in $G$ together with color $k+1$. 
Hence, vertices of $\mu (G)$ with the same color $i\in \{1,\dots ,k\}$ in $\mu (G)$, have neighborhoods with different sets of colors. 
From here, the inequality follows.
\end{proof}    

We show next that this bound is tight.

\begin{prop} \label{p:3}
If $G$ is a complete multipartite graph, then $\chi_{_{NL}}(\mu(G))= \chi_{_{NL}}(G)+1$.
\end{prop}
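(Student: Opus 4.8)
The plan is to prove the reverse inequality $\chi_{_{NL}}(\mu(G)) \ge \chi_{_{NL}}(G)+1$, since Proposition~\ref{p:myci} already supplies the upper bound $\chi_{_{NL}}(\mu(G)) \le \chi_{_{NL}}(G)+1$. Write $G$ with partite sets $V_1,\dots,V_r$ (so $r\ge 2$), put $n_\ell=|V_\ell|$, and recall from Theorem~\ref{xnln} that $\chi_{_{NL}}(G)=n$. Thus it suffices to show that $\mu(G)$ admits no NL-coloring with only $n$ colors. The first step is to observe that the original vertices $v_1,\dots,v_n$ already force $n$ distinct colors in $\mu(G)$: two vertices $v_i,v_{i'}$ lying in different parts are adjacent, while two lying in the same part $V_\ell$ have the identical open neighborhood $(V(G)\setminus V_\ell)\cup\{u_j : v_j\notin V_\ell\}$ in $\mu(G)$, hence are twins; in either case they cannot share a color in an NL-coloring. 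So in a hypothetical $n$-NL-coloring $\Pi$ the vertices $v_i$ use each of the $n$ colors exactly once, and we may assume $v_i$ has color $i$.

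Second, I would pin down the colors forced on the shadow vertices. Let $C_\ell\subseteq\{1,\dots,n\}$ be the set of colors of the $v$-vertices of $V_\ell$, so $|C_\ell|=n_\ell$. A shadow $u_j$ with $v_j\in V_{\ell}$ is adjacent in $\mu(G)$ exactly to $w$ and to all $v_i$ with $v_i\notin V_\ell$; since the latter carry precisely the colors $\{1,\dots,n\}\setminus C_\ell$, properness of $\Pi$ forces the color of $u_j$ to lie in $C_\ell$. Moreover $u_j$ is adjacent to $w$, so $u_j$ also avoids the color of $w$.

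The contradiction then comes from a pigeonhole count on a single part. Let $a$ be the color of $w$ and let $V_{\ell^*}$ be the part containing the unique vertex $v_a$ of color $a$. Within $V_{\ell^*}$ the shadow vertices form a set of $n_{\ell^*}$ mutual twins, since they all share the neighborhood $\{w\}\cup\{v_i:v_i\notin V_{\ell^*}\}$, so they must receive pairwise distinct colors; but by the previous paragraph each of them must lie in $C_{\ell^*}\setminus\{a\}$, a set of only $n_{\ell^*}-1$ colors. This is impossible, so no $n$-NL-coloring exists, whence $\chi_{_{NL}}(\mu(G))=n+1$.

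I expect the only delicate point to be the bookkeeping in the second step, namely correctly arguing that each shadow $u_j$ is confined to the colors of its own part and that same-part shadows are twins, since everything downstream rests on it; the final pigeonhole step is then immediate. A minor case to dispatch is $n_{\ell^*}=1$, where $C_{\ell^*}\setminus\{a\}=\emptyset$ and the single shadow of $v_a$ already has no admissible color, so the same contradiction applies a fortiori.
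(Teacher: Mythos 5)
Your proposal is correct, and it reaches the contradiction by a different decomposition than the paper. The paper's proof runs the pigeonhole twice, globally: since all $n$ shadows are adjacent to $w$, they use at most $n-1$ colors, so two shadows $u_1,u_2$ collide; this forces $v_1\sim v_2$ (else $u_1,u_2$ are false twins), whence every original vertex is adjacent to $u_1$ or $u_2$ and the shared shadow color is excluded from all of $v_1,\dots,v_n$; a second pigeonhole then collides two originals, which must be false twins, a contradiction. You instead first establish that the $n$ originals are rainbow (same-part originals are false twins in $\mu(G)$, cross-part ones are adjacent), then confine each shadow's color to its own part's palette $C_\ell$ minus the color $a$ of $w$, and finish with a single local pigeonhole in the part $V_{\ell^*}$ containing $v_a$: its $n_{\ell^*}$ pairwise-twin shadows need distinct colors from the $(n_{\ell^*}-1)$-set $C_{\ell^*}\setminus\{a\}$ (including correctly the degenerate case $n_{\ell^*}=1$). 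All the twin claims you rely on are sound, since in an NL-coloring false twins can never share a color. What each approach buys: yours yields a complete structural picture of any hypothetical $n$-coloring (rainbow originals, per-part shadow palettes) and avoids the paper's adjacency deduction $v_1\sim v_2$, making the contradiction local and transparent; the paper's is slightly shorter, needing no palette bookkeeping $C_\ell$, at the cost of a two-stage global argument whose second stage depends on the first. Both use the same external inputs (Proposition~\ref{p:myci} for the upper bound and $\chi_{_{NL}}(G)=n$ from Theorem~\ref{xnln}), so your proof is a valid drop-in alternative.
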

\begin{proof}
By Theorem \ref{xnln}, we know that $G$ is a complete multipartite graph of order $n$ if and only $\chi_{_{NL}}(G)= n$.
In order to derive a contradiction, assume that there exists a  $NL$-coloring $\Pi=\{S_1, \dots,S_n\}$ of $\mu(G)$ using  $n$ colors.
Without loss of generality,  assume that $w \in S_1$.
Thus,  vertices $u_1,\dots ,u_n$ are colored using at most $n-1$ colors, so that there are 2 vertices with the same color, say $u_1,u_2\in S_2$. 
Hence, vertices $v_1$ and $v_2$ must be adjacent in $G$, otherwise $u_1$ and $u_2$ would be false twins in $\mu (G)$, which is a contradiction, because both vertices have the same color in $\mu (G)$.
Hence, $\{ v_1,\dots ,v_n\}\subseteq N(u_1)\cup N(u_2)$, which implies that no vertex in $\{ v_1,\dots ,v_n\}$ has color $2$. 
Thus, the $n$ vertices $v_1,\dots ,v_n$ are colored using at most $n-1$ colors in $\mu(G)$, which in turn implies that there exist two vertices, $v_{i_1}$ and $v_{i_2}$ with a same color. Thus, $v_{i_1}$ and $v_{i_2}$ must be non-adjacent in $G$, implying that $v_{i_1}$ and $v_{i_2}$ are false twins with the same color, which is again a contradiction.
\end{proof}

\newpage
\section{Concluding remarks and open problems}\label{op}

In this paper, we have introduced the neighbor-locating chromatic number of a graph, parameter that 
measures the minimum  number of colors needed to paint a graph in such a way that any two vertices
with the same color can be differentiated by the set of colors used by its neighbors. 
We believe that  this new parameter will play a significant roll in the study of the structure of a graph, \textit{ per se} and by comparison with other previous known parameters such as the metric-locating chromatic number  and  the partition metric-location-domination number. 

In view of the results obtained in the present paper and in the simultaneous work  \cite{aghmp19}(where we 
focus our attention on determining the neighbor-locating chromatic number of paths, cycles, fans, wheels and unicyclic graphs),
we propose the following conjectures as future development directions in the study of  neighbor-locating partitions of a graph.

The following conjecture arises from the results obtained in Sections \ref{lp} and \ref{bounds}.
	\begin{conj}
			For each pair $h$, $k$ of integers with $3 \le  h \le k$,
			there exists a connected graph $G$ such that $\chi _{_{L}}(G)=h$ and $\chi _{_{NL}}(G)=k$.
	\end{conj}

In Section \ref{ss.join}, we have  addressed the behavior
 of the neighbor-locating chromatic number with respect to the join and the disjoint union of graphs.
 The following analysis  is related to this behavior with respect to  the Cartesian product $G\square H$
   and the lexicographic product $G[H]$  of two arbitrary graphs $G$ and $H$ \cite{hik11}.

Given an NL-coloring of $G$ with  colors $A=\{ a_1,\dots ,a_r \}$
 and an NL-coloring of $H$ with disjoint set of colors $B=\{ b_1,\dots ,b_s \}$,  
 consider the  coloring of $G\square H$ using the set of colors  
 $A\times B$ as follows: assign color $(a_i,b_j)$ to a vertex $(u,v)$ if $u$ has 
 color $a_i$ in $G$ and $v$ has color $b_j$ in $H$.
It is an easy exercise to prove that this is an NL-coloring of $G\square H$.
Thus,
$$\chi_{_{NL}}(G\square H)\le\chi_{_{NL}} (G) \, \chi_{_{NL}} (H).$$
In some way, this bound is  best possible since the  equality holds,
 for instance, when $G\cong H\cong P_2$.

With regard to the lexicographic product, it is easy to check that
$$\omega (G) \, \omega (H)\le \chi_{_{NL}} (G[H])\le\chi_{_{NL}} (G) \, \chi_{_{NL}} (H),$$
where $\omega (G)$ denotes the clique number of $G$. 
However, we believe that, in general, these bounds are far from being tight.
It is therefore an open problem  to find tighter bounds.
Another interesting problem is to determine the neighbor-locating chromatic number
 of both the Cartesian product and the lexicographic product of two graphs when one of them is 
  a path or a complete graph or a cycle.
  
We propose the following conjecture involving  both operations.

\begin{conj}
If $G$ and $H$ are connected graphs, then
$\chi _{_{NL}} (G[H]) \le \chi _{_{NL}} (G\square H)$.
\end{conj}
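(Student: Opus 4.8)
The plan is to exploit the fact that $G\square H$ is a spanning subgraph of $G[H]$, together with a precise description of the neighborhoods in each product, so as to transfer locating information from the sparser product to the denser one. First I would record the two governing identities: for every vertex $(u,v)$,
$$N_{G[H]}(u,v)=(\{u\}\times N_H(v))\cup(N_G(u)\times V(H)),\qquad N_{G\square H}(u,v)=(\{u\}\times N_H(v))\cup(N_G(u)\times\{v\}).$$
Thus the layers $\{u\}\times V(H)$ are copies of $H$ in both products, and the difference lies only in the cross edges: in $G[H]$ two adjacent layers are completely joined, whereas in $G\square H$ they are joined by a perfect matching. In particular, an independent set of $G[H]$ is precisely a union of independent sets of $H$, one inside each layer of a set of layers that is itself independent in $G$; this is the properness constraint that any NL-coloring of $G[H]$ must respect.

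The main structural tool I would introduce is the layer color profile. Given a coloring $\Psi$ of $G[H]$ and a layer $u$, set $D(u)=\bigcup_{u'\in N_G(u)}\{\,\text{colors used in layer }u'\,\}$. Because adjacent layers are completely joined, the colors seen by $(u,v)$ outside its own layer equal $D(u)$ and are independent of $v$. Hence two equally colored vertices $(u,v),(u,v')$ of one layer are neighbor-located exactly when the color sets of $N_H(v)$ and $N_H(v')$ differ after adjoining $D(u)$, while two equally colored vertices lying in distinct (hence non-adjacent) layers are compared through $D(u)$ versus $D(u')$. This reduces the NL-property in $G[H]$ to a local condition inside each layer plus a global consistency condition on the profiles $D(u)$. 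Carrying out the same computation for $G\square H$ shows that there the cross contribution depends on both coordinates of $(u,v)$, which is the precise sense in which location is more costly in the Cartesian product and which makes the conjectured inequality plausible.

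With these tools I would attempt a direct transfer: starting from an optimal NL-coloring $\Phi$ of $G\square H$ with $k=\chi_{_{NL}}(G\square H)$ colors, I would try to build an NL-coloring of $G[H]$ using at most $k$ colors, recoloring each layer so as to restore the properness constraint across adjacent layers while preserving the within-layer distinctions already guaranteed by $\Phi$. In parallel I would settle the structured cases first, namely $H$ complete, $G$ complete, and $G$ a path or a cycle, both to accumulate evidence and to reveal the shape of a general construction.

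I expect the crux to be that $\chi_{_{NL}}$ is not monotone under edge addition, so the inclusion $E(G\square H)\subseteq E(G[H])$ cannot by itself fix the direction of the inequality and the layered structure must be used essentially; the remark following Proposition~\ref{subgrafo} already shows that $\chi_{_{NL}}$ is not monotone under the subgraph relation, since $C_4$ is a subgraph of $G_3$ yet $\chi_{_{NL}}(C_4)=4>3=\chi_{_{NL}}(G_3)$. Concretely, the obstacle is that a color class of $\Phi$ may meet two adjacent $G$-layers, which is forbidden in $G[H]$, and a permutation of colors within a layer cannot reduce the number of colors that layer uses; one must therefore either argue that an optimal NL-coloring of $G\square H$ can always be chosen to respect the layer structure demanded by $G[H]$, or exhibit a fresh construction whose color count is provably bounded by $\chi_{_{NL}}(G\square H)$.
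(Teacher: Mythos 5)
You should first be aware that the statement you were given is not a theorem of the paper: it appears in Section~\ref{op} as an open conjecture, offered without proof and supported only by the two easy observations proved nearby, namely $\chi_{_{NL}}(G\square H)\le \chi_{_{NL}}(G)\,\chi_{_{NL}}(H)$ (via the product coloring with color set $A\times B$) and $\omega(G)\,\omega(H)\le \chi_{_{NL}}(G[H])\le \chi_{_{NL}}(G)\,\chi_{_{NL}}(H)$. So there is no proof in the paper to compare yours against, and, more to the point, your proposal does not supply one either: it is a research plan whose decisive step is explicitly left open. Your preparatory material is essentially sound --- the neighborhood identities for the two products are correct, as is the observation that independent sets of $G[H]$ are unions of independent sets of $H$ placed on a $G$-independent set of layers, and the layer profile $D(u)$ is the right object since in $G[H]$ the cross contribution to the color set of $(u,v)$ is $D(u)$, independent of $v$. (One small inaccuracy: two equally colored vertices in distinct layers of $G[H]$ are not compared through $D(u)$ versus $D(u')$ alone; the within-layer colors of $N_H(v)$ and $N_H(v')$ also enter.) But none of this yields the inequality.

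The genuine gap is the transfer step itself, and it is worth naming why it fails as stated rather than merely being unfinished. In $G\square H$, two same-colored vertices $(u,v)$ and $(u,v')$ of one layer can be neighbor-located purely by their cross neighbors, since the colors of $(u',v)$ and $(u',v')$ for $u'\in N_G(u)$ depend on the $H$-coordinate; in $G[H]$ every vertex of layer $u$ sees the \emph{entire} set $D(u)$, so all such cross distinctions are erased and within-layer location must be achieved by colors of $N_H(v)$ lying outside $D(u)$ --- a strictly stronger demand that a recoloring of an optimal coloring $\Phi$ of $G\square H$, layer by layer, has no evident way to meet without new colors. Your proposal correctly identifies the second obstacle (color classes of $\Phi$ meeting adjacent layers violate properness in $G[H]$) and correctly notes, via the remark after Proposition~\ref{subgrafo} ($C_4$ is a subgraph of $G_3$ yet $\chi_{_{NL}}(C_4)=4>3$), that edge-monotonicity cannot settle the direction of the inequality; but it then ends with a disjunction (``either argue that an optimal NL-coloring of $G\square H$ respects the layer structure, or exhibit a fresh construction'') for which no argument is given, and the first alternative is false in general --- already for $G\cong H\cong P_2$ the optimal $4$-NL-coloring of $C_4=P_2\square P_2$ uses four singleton classes precisely because no layer-respecting scheme with fewer colors exists, and for larger products optimal colorings of $G\square H$ routinely place one color class across adjacent layers. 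In short: your framework is a reasonable way to attack the conjecture, but as a proof it is missing its central lemma, and the statement remains open, exactly as the paper records it.
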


In Section \ref{ss.split}, we dealt with the problem of determining
the neighbor-locating chromatic number of Mycielski graphs $\mu(G)$. 
We have shown that in general $\chi_{_{NL}}(\mu(G))\leq \chi_{_{NL}}(G)+1$, but conjecture that the
equality holds for any graph $G$.

\begin{conj} \label{conj:mycie}
For any graph $G$, $\chi_{_{NL}}(\mu(G))= \chi_{_{NL}}(G)+1$.
\end{conj}

Proposition \ref{p:3}  
supports this conjecture for the class of complete multipartite graphs.



\begin{thebibliography}{99}




\bibitem{aghmp19}
{\small {\sc L. Alcon, M. Gutierrez, C. Hernando, M. Mora, I. M. Pelayo:}
{\it Neighbor-locating-chromatic number of pseudotrees.}
{Preprint} (2018)}.






\bibitem{bd}
{\small {\sc E. T. Baskoro and D. Darmaji:}
 {\it The partition dimension of corona product of two graphs.}
 {Far East J. Math. Sci.,} {\bf 66} (12) (2012), 181--196.}




\bibitem{bean14}
{\small {\sc A. Behtoei and M. Anbarloei}
{\it  The locating chromatic number of the join of graphs}
{ Bull. Iranian Math. Soc.} {\bf 40} (6) (2014), 1491--1504.}


\bibitem{bean15}
{\small {\sc A. Behtoei and M. Anbarloei:}
 {\it A Bound for the Locating Chromatic Numbers of Trees.}
 {Trans. Comb.}, {\bf 4} (1) (2015), 31--41.}





\bibitem{beom11}
{\small {\sc A. Behtoei and B. Omoomi:}
 {\it On the locating chromatic number of Kneser graphs.}
{Discrete Appl. Math.}, {\bf 159} (18) (2011), 2214--2221.}


\bibitem{beom16}
{\small {\sc A. Behtoei and B. Omoomi:}
 {\it On the locating chromatic number of the Cartesian product of graphs.}
{Ars Comb.}, {\bf 126}  (2016), 221--235.}




\bibitem{chmppsw07}
{\small {\sc J. C\'aceres, C. Hernando, M. Mora, I. M. Pelayo, M. L. Puertas, C. Seara and D. R. Wood:}
{\it On the metric dimension of Cartesian products of graphs.}
 SIAM J. Discrete Math.,  {\bf 21} (2) (2007), 423--441.}


\bibitem{chmpp13}
{\small {\sc J. C\'aceres, C. Hernando, M. Mora, I. M. Pelayo and M. L. Puertas:}
{\it Locating-dominating codes: bounds and extremal cardinalities.}
 Appl. Math. Comput.,  {\bf 220}  (2013), 38--45.}


\bibitem{cy}
{\small {\sc N. Campanelli and I. G. Yero:}
 {\it On some resolving partitions for the lexicographic product of two graphs.}
 { International Journal of Computer Mathematics,}  {\bf 94} (7)  (2017), 1363--1374.}



\bibitem{chagiha08}
{\small {\sc G. G. Chappell, J. Gimbel  and  C. Hartman:}
{\it Bounds on the metric and partition dimensions of a graph.}
{Ars Combinatoria,} {\bf 88} (2008), 349--366.}



\bibitem{cherheslzh02}
{\small {\sc G. Chartrand, D. Erwin, M. A. Henning, P. J. Slater and P. Zhang:}
{\it The locating-chromatic number of a graph.}
{Bull. Inst. Combin. Appl.}, {\bf 36} (2002), 89--101.}



\bibitem{cherheslzh03}
{\small {\sc G. Chartrand, D. Erwin, M. A. Henning, P. J. Slater and P. Zhang:}
{\it Graphs of order $n$ with locating-chromatic number $n-1$.}
{Discrete Math.}, {\bf 269} (2003), 65--79.}



\bibitem{chlezh11}
{\small {\sc G. Chartrand, L. Lesniak and P. Zhang:}
{\it Graphs and Digraphs, (5th edition).}
CRC Press, Boca Raton, Florida, 2011.}





\bibitem{ChaSaZh00}
{\small {\sc G. Chartrand, E. Salehi and P. Zhang:}
{\it The partition dimension of a graph.}
{Aequationes Mathematicae},  {\bf 59} (2000), 45--54.}





\bibitem{da}
{\small {\sc D. Darmaji and R. Alfarisi:}
 {\it On the partition dimension of comb product of path and complete graph.}
  {AIP Conference Proceedings} {\bf 1867} (2017), 020038.}




\bibitem{fegooe06}
{\small {\sc M. Fehr, S. Gosselin, and O. R. Oellermann:}
{\it The partition dimension of Cayley digraphs.}
{Aequationes Mathematicae}, {\bf 71} (1-2) (2006), 1--18.}



\bibitem{ferogo14}
{\small {\sc H. Fernau, J. A.  Rodr\'iguez-Vel\'azquez  and I. Gonz\'alez-Yero:}
{\it On the partition dimension of unicyclic graphs.}
{Bull. Math. Soc. Sci. Math. Roumanie}, {\bf 57(105)} (4) (2014), 381--391.}




\bibitem{gyro10}
{\small {\sc I. Gonz\'alez Yero and J. A. Rodr\'iguez-Vel\'azquez:}
 {\it A note on the partition dimension of Cartesian product graphs.}
{Applied Mathematics and Computation}, {\bf 217} (7)  (2010), 3571--3574.}


\bibitem{gyjakuta14}
{\small {\sc I. Gonz\'alez Yero, M. Jakovac, D. Kuziak and A. Taranenko:}
{\it The partition dimension of strong product graphs and Cartesian product graphs.}
{Discrete Math.}, {\bf 331} (2014), 43--52.}


\bibitem{gsrm}
{\small {\sc C. Grigorious, S. Stephen, R. Rajan and M. Miller:}
 {\it On the partition dimension of circulant graphs.}
  {Computer Journal} {\bf 60} (2017), 180-–184.}


\bibitem{gsrmw}
{\small {\sc C. Grigorious, S. Stephen, B. Rajan, M. Miller and A. William:}
 {\it On the partition dimension of a class of circulant graphs.}
 { Inform. Process. Lett.} {\bf 114} (2014), 353–-356.}


\bibitem{hik11}
{\small {\sc R. Hammark, W. Imrich and  S. Klavžar:}
{\it Handbook of Product Graphs, (2nd edition).}
CRC Press, Boca Raton, Florida, 2011.}





\bibitem{hararymelter}
{\small {\sc F. Harary and R. Melter:}
{\it On the metric dimension of a graph.}
{Ars Combinatoria}, {\bf 2} (1976), 191–195.}


\bibitem{hb15}
{\small {\sc D. O. Haryeniand E. T. Baskoro:}
{\it Partition Dimension of Some Classes of Homogenous Disconnected Graphs.}
{Procedia Computer Science},  {\bf 74} (2015), 73--78.}




\bibitem{heoe04}
{\small {\sc M. A. Henning and O. R. Oellermann:}
{\it Metric-locating-dominating sets in graphs.}
{Ars Combin.}, {\bf 73} (2004), 129--141.}



\bibitem{hmpsw10}
{\small {\sc C. Hernando, M. Mora, I. M. Pelayo, C. Seara and D. R. Wood:}
{\it Extremal graph theory for metric dimension and diameter.}
{Electron. J. Combin.}, {\bf 17}  (2010), R30, 28pp.}

\bibitem{hmp14}
{\small {\sc C. Hernando, M. Mora and  I. M. Pelayo:}
{\it  Nordhaus-Gaddum bounds for locating-domination}.
{Eur. J. Combin.}, {\bf 36}  (2014), 1--6.}



\bibitem{hemope16}
{\small {\sc C. Hernando, M. Mora and I. M. Pelayo:}
{\it Metric-locating-dominating partitions in graphs.}
{Submitted}. (https://arxiv.org/abs/1711.01086).}


\bibitem{jrsa}
{\small {\sc I. Javaid, N. K. Raja, M. Salman and M. N. Azhar:}
 {\it The partition dimension of circulant graphs.}
 {World Applied Sciences Journal} {\bf 18} (2012), 1705-–1717.}



\bibitem{js08}
{\small {\sc I. Javaid and S. Shokat:}
 {\it The Partition Dimension of Some Wheel Related Graphs.}
{Journal of Prime Research in Mathematics} {\bf 4} (2008), 154--164.}




\bibitem{mv}
{\small {\sc E. C. M. Maritz and T. Vetrík:}
 {\it The partition dimension of circulant graphs.}
 { Quaestiones Mathematicae}  {\bf 41} (1) (2018), 49--63.}





\bibitem{myci}
{\small {\sc J. Mycielski:}
	{\it Sur le coloriage des graphes.}
	{  Colloq. Math.}  {\bf 3}  (1955), 161--162.}


\bibitem{pbas15}
{\small {\sc I. A. Purwasih, E. T. Baskoro, H. Assiyatun and D. Suprijanto:}
{\it The Bounds on the Locating-Chromatic Number for a Subdivision of a Graph on One Edge.}
{Procedia Computer Science}, {\bf 74} (2015), 84--88.}


\bibitem{pbasb17}
{\small {\sc I. A. Purwasih, E. T. Baskoro,  H. Assiyatun, D. Suprijanto and M. Baca:}
{\it The locating-chromatic number for Halin graphs.}
{Communications in Combinatorics and Optimization}, {\bf 2} (1) (2017),  1--9.}


\bibitem{ryk}
{\small {\sc J. A. Rodríguez-Velázquez, I. G. Yero, and D. Kuziak:}
 {\it The partition dimension of corona product graphs.}
  {Ars Combinatoria} {\bf 127} (2016), 387--399.}



\bibitem{royele14}
{\small {\sc J. A. Rodr\'iguez-Vel\'azquez, I. Gonz\'alez Yero and M. Lemanska:}
 {\it On the partition dimension of trees.}
{Discrete Appl. Math.}, {\bf 166}  (2014), 204--209.}





\bibitem{si}
{\small {\sc H. M. A. Siddiqui and M. Imran:}
 {\it Computation of metric dimension and partition dimension of nanotubes.}
  {J. Comput. Theor. Nanosci.,} {\bf 12} (2015), 199–-203.}



\bibitem{slater}
{\small {\sc P. J. Slater:}
	{\it  Leaves of trees.}
	\newblock {Proc. 6th Southeastern Conf. on Combinatorics,
		Graph Theory, and Computing, Congr. Numer.}, {\bf 14}  (1975), 549--559.}

\bibitem{slater2}
{\small {\sc P. J. Slater:}
{\it Dominating and reference sets in a graph.}
{ J. Math. Phys. Sci.,} {\bf 22} (4) (1988) 445-–455.}

\bibitem{sba15}
{\small {\sc D. K. Syofyan,, E. T. Baskoro and  H. Assiyatun:}
{\it The Locating-Chromatic Number of Binary Trees.}
{Procedia Computer Science}, {\bf 74} (2015), 79--83.}





\bibitem{toim09}
{\small {\sc I. Tomescu and M. Imran:}
{\it On metric and partition dimensions of some infinite regular graphs.}
{Bull. Math. Soc. Sci. Math. Roumanie (N.S.)}, {\bf 52} (100) (4) (2009), 461--472.}


\bibitem{tjs07}
{\small {\sc I. Tomescu, I. Javaid and I. Slamin:}
 {\it On the partition dimension and connected partition dimension of wheels.}
 { Ars Combinatoria}, {\bf 84} (2007),  311 – 318.}




\bibitem{webasiut13}
{\small {\sc  D. Welyyanti, E. T. Baskoro, R. Simanjuntak and S. Uttunggadewa:}
{\it On locating-chromatic number of complete n-ary tree.}
{AKCE Int. J. Graphs Comb.}, {\bf 10} (3) (2013), 309--315.}

\bibitem{webasiut15}
{\small {\sc D. Welyyanti, E. T. Baskoro, R. Simanjuntak and S. Uttunggadewa:}
{\it On Locating-chromatic Number for Graphs with Dominant Vertices.}
{Procedia Computer Science},{\bf 74} (2015), 89-—92.}


\bibitem{webasiut17}
{\small {\sc D. Welyyanti, E. T. Baskoro, R. Simajuntak and S. Uttunggadewa:}
 {\it On the locating-chromatic number for graphs with two homogenous components.}
  {Journal of Physics: Conference Series}, {\bf 893} (2017),  012040.}

\bibitem{yjkt}
{\small {\sc I. G. Yero, M. Jakovac, D. Kuziak and A. Taranenko:}
 {\it The partition dimension of strong product graphs and Cartesian product graphs.}
  {Discrete Math.} {\bf 331} (2014), 43-–52.}


\bibitem{yr}
{\small {\sc I. G. Yero and J. A. Rodríguez-Velázquez:}
  {\it A note on the partition dimension of Cartesian product graphs.}
   {Appl. Math. Comput.} {\bf 217} (7) (2010),  3571–3574.}



\end{thebibliography}
\end{document}